\newcommand{\N}{ \mathbb N}
\newcommand{\Z}{ \mathbb Z}
\newcommand{\C}{ \mathbb C}
\newcommand{\R}{\mathbb{R}}
\newcommand{\FF}{\mathbb{F}}
\newcommand{\PP}{\mathbb{P}}
\newcommand{\G}{ \Gamma}
\newcommand{\lam}{\lambda}
\renewcommand{\l}{\lambda}
\newcommand{\PDn}{P_D^{(n)}}
\newcommand{\PPDn}{\mathbb{P}_D^{(n)}}
\newcommand{\FDn}{F_D^{(n)}}
\newcommand{\FFDn}{\mathbb{F}_D^{(n)}}
\newcommand{\PPDtwo}{\PP_D^{(2)}}
\newcommand{\FDtwo}{F_D^{(2)}}
\newcommand{\Ftwo}{{}_{2}F_{1}}
\newcommand{\PPtwo}{{}_{2}\mathbb{P}_{1}}
\newcommand{\PPn}{{}_{n+1}\mathbb{P}_{n}}
\newcommand{\FFtwo}{{}_{2}\mathbb{F}_{1}}
\newcommand{\FFn}{{}_{n+1}\mathbb{F}_{n}}
\renewcommand{\(}{\left(}
\renewcommand{\)}{\right)}
\newcommand{\ol}{\overline}
\newcommand{\eps}{\varepsilon}
\newcommand{\CC}{\binom}
\newcommand{\poch}[2]{(#1)_{#2}}
\newcommand{\FFqhat}{\widehat{\FF_q^\times}}
\newcommand{\dsp}{\displaystyle}
\newtheorem{theorem}{Theorem}[section]
\newtheorem{corollary}[theorem]{Corollary}
\newtheorem{definition}[theorem]{Definition}
\newtheorem{lemma}[theorem]{Lemma}
\newtheorem{proposition}[theorem]{Proposition}
\newcommand{\rmhs}{\color{black}}
\newcommand{\ch}[2]{\begin{pmatrix} #1 \\ #2
    \end{pmatrix}}
\renewcommand{\Re}{\mathrm{Re}}
\newcommand{\ClamNijk}{C_{\mathbf{\lam}}^{[N; i, j, \mathbf{k}]}}
\newcommand{\XlamNijk}{X_{\mathbf{\lam}}^{[N; i, j, \mathbf{k}]}}
\newcommand*\HYPERskip{&}
\newcommand*\pFq{
\begingroup
\catcode`\,\active
\def ,{\HYPERskip}%
\doHyper
}
\def\doHyper#1#2#3#4#5{%
\, _{#1}F_{#2}\left[\begin{matrix}#3 \smallskip \\  #4\end{matrix} \; ; \; #5\right]%
\endgroup
}
\newcommand*\HYPER{&}
\newcommand*\pFFq{
\begingroup
\catcode`\,\active
\def ,{\HYPER}%
\doHyperF
}
\def\doHyperF#1#2#3#4#5{%
\, _{#1}{\mathbb F}_{#2}\left[\begin{matrix}#3 \smallskip \\  #4\end{matrix} \; ; \; #5\right]%
\endgroup
}
\newcommand*\HYPERpp{&}
\newcommand*\pPPq{
\begingroup
\catcode`\,\active
\def ,{\HYPERpp}%
\doHyperFpp
}
\def\doHyperFpp#1#2#3#4#5{%
\, _{#1}{\mathbb P}_{#2}\left[\begin{matrix}#3 \smallskip \\  #4\end{matrix} \; ; \; #5\right]%
\endgroup
}
\newcommand*\Lauricella{&}
\newcommand*\Ln{
\begingroup
\catcode`\,\active
\def ,{\Lauricella}%
\doLauricella
}
\def\doLauricella#1#2#3#4#5{%
\, F^{(#1)}_D\left[\begin{matrix}#2; & #3 \smallskip \\  {} & #4 \end{matrix} \; ; \; #5 \right]%
\endgroup
}
\newcommand*\ApellFF{&}
\newcommand*\FAFn{
\begingroup
\catcode`\,\active
\def ,{\ApellFF}%
\doApellFFp
}
\def\doApellFFp#1#2#3#4#5{%
\, {\mathbb F}^{(#1)}_D\left[\begin{matrix}#2;&#3 \smallskip \\  {} & #4 \end{matrix} \; ; \;#5\right]%
\endgroup
}
\newcommand*\LauricellaP{&}
\newcommand*\LnP{
\begingroup
\catcode`\,\active
\def ,{\LauricellaP}%
\doLauricellaP
}
\def\doLauricellaP#1#2#3#4#5{%
\, P^{(#1)}_D\left[\begin{matrix}#2; & #3 \smallskip \\  {} & #4 \end{matrix} \; ; \; #5 \right]%
\endgroup
}
\newcommand*\ApellFp{&}
\newcommand*\FAPn{
\begingroup
\catcode`\,\active
\def ,{\ApellFp}%
\doApellFpp
}
\def\doApellFpp#1#2#3#4#5{%
\, {\mathbb P}^{(#1)}_D\left[\begin{matrix}#2;&#3 \smallskip \\  {} & #4 \end{matrix} \; ; \; #5\right]%
\endgroup
}
\title{A Cubic Transformation Formula for \\Appell-Lauricella hypergeometric functions \\over finite fields}
\author{Sharon Frechette}
\address{Department of Mathematics and Computer Science\\
  College of the Holy Cross\\
  Worcester, MA 01610} 
\email{sfrechet@mathcs.holycross.edu}
\author{Holly Swisher}
\address{Department of Mathematics\\
Oregon State University\\
Corvallis, OR 97331} \email{swisherh@math.oregonstate.edu}
\author{Fang-Ting Tu}
\address{Department of Mathematics\\
  Louisiana State University\\
  Baton Rouge, LA 70803} \email{ tu@math.lsu.edu}
\subjclass[2010]{11F24, 33C05, 33C70, 11T20}
\keywords{Hypergometric functions, finite field}
\begin{document}


\begin{abstract}

We define a finite-field version of Appell-Lauricella hypergeometric functions built from period functions in several variables, paralleling the development by Fuselier, et. al \cite{dictionary} in the single variable case.  We develop geometric connections between these functions and the family of generalized Picard curves.  In our main result, we use finite-field Appell-Lauricella functions to establish a finite-field analogue of Koike and Shiga's cubic transformation \cite{koike-shiga1} for the Appell hypergeometric function $F_1$, proving a conjecture of Ling Long.  We use our multivariable period functions to construct formulas for the number of $\FF_p$-points on the generalized Picard curves.  We also give some transformation and reduction formulas for the period functions, and consequently for the finite-field Appell-Lauricella  functions.
\end{abstract}

\maketitle


\section{Introduction and Statement of Results}\label{section:intro}
Classical hypergeometric functions are among the most versatile of all special functions.  These functions and their finite-field analogues have numerous applications in number theory and geometry.  For instance, finite-field hypergeometric functions play a role in proving congruences and supercongruences, they count points modulo $p$ over algebraic varieties and affine hypersurfaces, and in certain instances they provide formulas for the Fourier coefficients of modular forms.

In this paper, we define functions $\FF_D^{(n)}$ as a finite-field version of the Appell-Lauricella hypergeometric functions $F_D^{(n)}$.  We develop the theory of these functions, with a focus on their geometric connections to the generalized Picard curves.  This parallels the construction (by the second and third authors, et. al.) in \cite{dictionary}, which was an effort to unify and improve on the interplay between classical and finite-field hypergeometric functions in the single-variable setting.

Our results are motivated by a conjecture of Ling Long, related to results by Koike and Shiga.  In \cite{koike-shiga1}, Koike and Shiga applied Appell's $F_1$-hypergeometric function in two variables to establish a new three-term arithmetic geometric mean result (AGM), related to Picard modular forms.   As a consequence of this cubic AGM, 
%
%
Koike and Shiga proved the following cubic transformation for Appell's $F_1$-function.
Let $x,y \in \mathbb{C}$, and let $\omega$ be a primitive cubic root of unity.  Then
\begin{multline}\label{F1-cubic}
F_1\left[ \frac{1}{3};\, \frac{1}{3},\,  \frac{1}{3};\, 1 \, \Big|\, 1-x^3,\,1-y^3\right]  \\
=
\frac{3}{1+x+y} \
F_1\left[ \frac{1}{3};\, \frac{1}{3},\, \frac{1}{3};\,1 \, \Big|\,
	\left(\frac{1+\omega x + \omega^2 y}{1 + x + y}\right)^3,\,
	\left(\frac{1+\omega^2 x + \omega y}{1 + x + y}\right)^3\, \right].
\end{multline}
%
%
As an application of Appell-Lauricella functions over finite fields, we prove the following finite-field analogue of Koike and Shiga's transformation, as conjectured by Ling Long.		
\begin{theorem}\label{finiteF1-cubic} 
Let $p\equiv 1 \pmod{3}$ be prime, let $\omega$ be a primitive cubic root of unity, and let $\eta_3$ be a primitive cubic character in $\widehat{\mathbb{F}_p^\times}$.  If $\lam,\mu \in \FF_p$ satisfy $1 + \lam + \mu \neq 0$,  then
\begin{align*}
\FAFn{2}{\eta_3}{\eta_3 &  \eta_3}{\eps}{1-\lam^3, 1-\mu^3} 
=
\FAFn{2}{\eta_3}{\eta_3 &  \eta_3}{\eps}
	{\left(\frac{1+\omega \lam + \omega^2 \mu}{1 + \lam + \mu}\right)^3, \left(\frac{1+\omega^2 \lam + \omega \mu}{1 + \lam + \mu}\right)^3\, }. 
\end{align*}
\end{theorem}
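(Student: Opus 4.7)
My plan is to prove the identity by passing both sides to their finite-field period-function (character-sum) representations and then carrying out a rational substitution over $\FF_p$ that mirrors Koike and Shiga's cubic change of variables. Using the Euler-type integral representation for $\FF_D^{(2)}$ developed earlier in the paper, the left-hand side becomes (up to a Gauss-sum normalization) a character sum of the form
\[
\sum_{t\in \FF_p}\eta_3(t)\,\overline{\eta_3}(1-t)\,\overline{\eta_3}\bigl(1-(1-\lam^3)t\bigr)\,\overline{\eta_3}\bigl(1-(1-\mu^3)t\bigr),
\]
and similarly the right-hand side is the analogous sum with $1-\lam^3$ and $1-\mu^3$ replaced by the cubes $\bigl((1+\omega\lam+\omega^2\mu)/(1+\lam+\mu)\bigr)^3$ and $\bigl((1+\omega^2\lam+\omega\mu)/(1+\lam+\mu)\bigr)^3$. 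The theorem is thus reduced to an identity between two explicit character sums indexed by $t\in\FF_p$.

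The core algebraic input is the factorization $1-z^3=(1-z)(1-\omega z)(1-\omega^2 z)$, which is valid over $\FF_p$ since $p\equiv 1\pmod 3$. The plan is to look for a M\"obius substitution $t=\phi(s)$ in the parameters $\lam,\mu,\omega$, guided by Koike and Shiga's rational parametrization of the associated Picard curve, under which each of $1-t$, $1-(1-\lam^3)t$, $1-(1-\mu^3)t$ becomes (up to units) a product of three linear factors in $s$ over $\FF_p$. Because $\overline{\eta_3}$ is multiplicative and cubic, the resulting nine linear factors can be regrouped into three cubic blocks of the form $1-Z^3$, and collecting the roots of these blocks should reproduce exactly the three cubes $1$, $\bigl((1+\omega\lam+\omega^2\mu)/(1+\lam+\mu)\bigr)^3$, and $\bigl((1+\omega^2\lam+\omega\mu)/(1+\lam+\mu)\bigr)^3$ appearing on the right-hand side. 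Any leftover linear or Jacobian-type factors produced by the substitution should be absorbed using the transformation and reduction formulas for the period functions $\PP_D^{(n)}$ that the abstract advertises as part of the paper.

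The main obstacle will be carrying out the substitution rigorously in the finite-field setting. Classically, a holomorphic change of variables in a period integral is automatic, but over $\FF_p$ one must verify bijectivity of $\phi$ on $\FF_p$ (or otherwise account for its fibers), handle the finitely many points where $\phi$ has a pole or lands at $0$ or $1$, and ensure that the induced change of summation variable does not introduce spurious contributions from trivial-character terms. It is at this step that the classical prefactor $3/(1+x+y)$ should disappear: the factor of $3$ is absorbed by the degree of the cubic substitution, while $1/(1+\lam+\mu)$ is absorbed by $\overline{\eta_3}\bigl((1+\lam+\mu)^3\bigr)=\eps(1+\lam+\mu)=1$ under the hypothesis $1+\lam+\mu\neq 0$. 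Once this bookkeeping is handled, the remaining content of the identity is a purely symbolic algebraic statement in $\lam,\mu$ and the cube roots of unity, which should follow from $1+\omega+\omega^2=0$.
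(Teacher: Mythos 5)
Your reduction of the theorem to an identity between two explicit character sums over $t\in\FF_p$ matches the paper's opening move, which rewrites both sides via the definition of $\FF_D^{(2)}$ as sums of $\eta_3^2$ evaluated at quintic polynomials $f_{\lambda,\mu}(x)$ and $g_{\lambda,\mu}(x)$. But the engine you propose for proving that identity --- a change of summation variable under which each linear factor becomes a product of three linear factors, to be regrouped into cubes --- does not work as described, and the difficulty you defer to your last paragraph is not bookkeeping but the whole problem. A M\"obius map has degree $1$ and cannot turn a linear factor in $t$ into a cubic in $s$; what your factorization scheme actually requires is a degree-$3$ rational substitution $\phi$, and such a map is not a bijection of $\FF_p$. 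The fiber count $\#\phi^{-1}(t)$ is itself of the shape $\sum_{j=0}^{2}\eta_3^{j}(\,\cdot\,)$, so re-indexing the sum introduces new $\eta_3$- and $\eta_3^2$-twisted character sums whose cancellation is exactly as hard as the original identity; nothing in the proposal addresses this. Moreover the closing claim that the residual identity ``should follow from $1+\omega+\omega^2=0$'' understates matters: the classical transformation it mirrors is itself a nontrivial theorem, not a formal consequence of that relation.

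The paper takes a genuinely different route that sidesteps the substitution entirely. It interprets the two character sums as pieces of the traces of Frobenius of the genus-$3$ Picard curves $y^3=c\,f_{\lambda,\mu}(x)$ and $y^3=c\,g_{\lambda,\mu}(x)$, using two choices of the twist $c$ (with $\eta_3(c)=1$ and $\eta_3(c)=\omega$) to isolate the $\eta_3^2$-component from the full trace. The Weil bound $\lvert a_p\rvert\le 6\sqrt{p}$ then reduces equality of traces to a congruence modulo $p$, and that congruence is proved by identifying the Hasse invariant of each curve with a truncated classical $F_D^{(2)}$ and invoking the Matsumoto--Ohara transformation formula specialized at $c=1-p$. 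If you wish to salvage a direct character-sum argument, you would have to exhibit the degree-$3$ substitution explicitly and prove that the extra twisted fiber contributions cancel --- a step that is currently missing and that is precisely why the one-variable case, Corollary \ref{finite2F1-cubic}, also required a nontrivial proof in the earlier literature.
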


When $\lambda = \mu$, we have the following corollary. 
%
%
\begin{corollary}\label{finite2F1-cubic}
For $p\equiv 1 \pmod{3}$ prime, and $\omega$ as above, if $\lam \in \FF_p$ satisfies $1 + 2\lam \neq 0$,  then
\begin{align*}
\pFFq{2}{1}{\eta_3 & \eta_3^2}{&\eps}{1-\lam^3} 
=
\pFFq{2}{1}{\eta_3 & \eta_3^2}{&\eps}{\left(\frac{1- \lam}{1 + 2\lam}\right)^3}.
\end{align*}
\end{corollary}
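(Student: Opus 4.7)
The plan is to deduce Corollary \ref{finite2F1-cubic} from Theorem \ref{finiteF1-cubic} by first specializing to $\mu = \lam$ and then applying a reduction formula that collapses a two-variable Appell-Lauricella $\FF_D^{(2)}$ with coincident arguments to a one-variable $\FFtwo$.

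Substituting $\mu = \lam$ into Theorem \ref{finiteF1-cubic} and using $\omega + \omega^2 = -1$, both numerators $1 + \omega\lam + \omega^2\lam$ and $1 + \omega^2\lam + \omega\lam$ simplify to $1 - \lam$, while the denominator becomes $1 + 2\lam$ (nonzero by hypothesis). Hence the two arguments on the right-hand side both collapse to $\bigl((1-\lam)/(1+2\lam)\bigr)^3$, while the left-hand arguments both equal $1-\lam^3$.

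Next, I would invoke the finite-field analogue of the classical identity $F_1[a;b_1,b_2;c;x,x] = {}_2F_1[a,b_1+b_2;c;x]$, namely a reduction of the form $\FF_D^{(2)}(A; B_1, B_2; C; z, z) = \FFtwo(A, B_1 B_2; C; z)$. Classically this follows from the double-series definition by reindexing by total degree $k = m + n$ and applying the Vandermonde convolution $\sum_{m + n = k} (b_1)_m (b_2)_n / (m!\,n!) = (b_1 + b_2)_k / k!$. The paper explicitly advertises reduction formulas of this type, and the finite-field version should come from expanding the character-sum/period-function definition of $\FF_D^{(2)}$ and applying the multiplicative analogue of Vandermonde (a Jacobi-sum convolution) to the inner sum. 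Taking $A = B_1 = B_2 = \eta_3$ (so $B_1 B_2 = \eta_3^2$) and $C = \eps$ and applying the reduction to both sides of the specialized Theorem \ref{finiteF1-cubic} then yields Corollary \ref{finite2F1-cubic}.

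The main obstacle is establishing this reduction formula cleanly over $\FF_p$. The classical proof is a one-line Vandermonde computation; the finite-field version needs the multiplicative analogue, a Jacobi-sum identity that collapses an inner convolution of characters into a single character sum, and one must also ensure no stray normalization factor is introduced when passing through the period-function representation. Once this reduction is in hand, the specialization of Theorem \ref{finiteF1-cubic} is purely mechanical.
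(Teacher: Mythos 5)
Your proposal is correct, and the overall strategy (set $\mu=\lam$, then collapse the coincident-argument $\mathbb{F}_D^{(2)}$ to a ${}_2\mathbb{F}_1$) matches the paper's; the difference is in which reduction formula you invoke. You use the finite-field analogue of $F_1[a;b_1,b_2;c\,|\,x,x]={}_2F_1[a,b_1+b_2;c\,|\,x]$ (the second equality of Proposition \ref{schlosser33ab}), whereas the paper applies Proposition \ref{FF-schlosser33a}, the analogue of the Euler-type first equality, which produces ${}_2\mathbb{F}_1$ with parameters $C\ol{A}=\eta_3^2$ and $C\ol{B_1B_2}=\eta_3$ together with a prefactor $C\ol{AB_1B_2}(1-\lam^3)=\eps(\lam^3)$ and a $\delta$-term, forcing a short case analysis at $\lam=0,1$ before invoking the symmetry of ${}_2\mathbb{F}_1$ in its upper characters. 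Two remarks on your route. First, the ``main obstacle'' you flag is not actually an obstacle: there is no multiplicative Vandermonde or Jacobi-sum convolution to prove, because directly from Definition \ref{PPDn} one has $\ol{B_1}(1-zy)\,\ol{B_2}(1-zy)=\ol{B_1B_2}(1-zy)$, so
\begin{equation*}
\mathbb{P}_D^{(2)}\!\left[A;\,B_1,B_2;\,C\,\big|\,z,z\right]=\mathbb{P}_D^{(1)}\!\left[A;\,B_1B_2;\,C\,\big|\,z\right],
\end{equation*}
exactly and with no error terms; the convolution only appears if one insists on working from the double Jacobi-sum expansion of Corollary \ref{prop: PD2-binom}, which is unnecessary. (Indeed, the first line of the paper's proof of Proposition \ref{FF-schlosser33a} is precisely this identity.) Second, there is a normalization point you should make explicit: $\mathbb{F}_D^{(2)}$ is normalized by $J(A,C\ol{A})$, so the displayed identity yields ${}_2\mathbb{F}_1[B_1B_2,A;C\,|\,z]$ (i.e.\ $\eta_3^2,\eta_3$ in that order), and passing to ${}_2\mathbb{F}_1[\eta_3,\eta_3^2;\eps\,|\,z]$ uses the symmetry of Proposition \ref{prop: commute and conjugation}(1), which requires primitivity ($A,B_1B_2\neq\eps$ and $\neq C$) -- satisfied here since $\eta_3,\eta_3^2\neq\eps$. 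With that noted, your route is actually slightly cleaner than the paper's, since it avoids the prefactor, the $\delta$-term, and the $\lam=0,1$ cases entirely.
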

The result of Corollary \ref{finite2F1-cubic} was first established in \cite{dictionary}, using a different method of proof.
It is a finite-field version of the cubic transformation 
%
%
\begin{equation}\label{2F1-cubic}
\pFq{2}{1}{\frac{1}{3},  \frac{2}{3}}{,1}{1-x^3} 
=
\frac{3}{1+2x} \
\pFq{2}{1}{\frac{1}{3},  \frac{2}{3}}{,1}{\left(\frac{1-x}{1+2x}\right)^3},
\end{equation}
proved by Borwein and Borwein \cite{borweins-1}, \cite{borweins-2} for $x \in \R$ with $0 < x < 1$, as a cubic analogue of Gauss' quadratic AGM.  

Taking the approach used in \cite{dictionary}, our finite-field Appell-Lauricella hypergeometric functions are defined in terms of finite-field period functions $\PP_D^{(n)}$.  These period functions are naturally related to periods of the generalized Picard curves 
\begin{equation}\label{C-lam}
\ClamNijk:\,\, y^N = x^i (1-x)^j (1-\lam_1 x)^{k_1} \cdots (1-\lam_n x)^{k_n}, 
\end{equation}
defined for distinct complex numbers $\lam_1, \ldots, \lam_n \neq 0,1$ and positive integers $N, i, j, k_1, \ldots, k_n$ with $\gcd(N, i, j, k_1, \ldots, k_n)=1$ and $N\nmid i+j+k_1+\cdots + k_n$.  As a consequence, the $\PP_D^{(n)}$ functions  are ideally suited for counting $\FF_p$-points on Picard curves; in Theorem \ref{Picard-pts}, we express these point counts in terms of finite sums of $\PP_D^{(n)}$ values.

Transformation and reduction formulas for classical hypergeometric functions have been successfully translated to the finite-field setting, first by Greene and also by authors such as McCarthy, and Fuselier et. al. (See \cite{greene},  \cite{mccarthy}, \cite{dictionary} for details.)  Transformation formulas for classical Appell-Lauricella hypergeometric functions can also be translated into the finite-field setting, using the same methods.  We carry out this process, proving several identities for the $\PP_D^{(n)}$- and $\FF_D^{(n)}$-functions.
We note that another version of finite-field Appell-Lauricella functions is independently defined by He \cite{he} and Li, et. al. \cite{li}, which closely follows Greene's definition.  For their version, they establish several degree 1 transformation and reduction formulas, including some that are analogous to the identities in this paper.
	
The paper is organized as follows:  Section \ref{section:prelim} includes definitions and properties needed for working with classical and finite-field Appell-Lauricella hypergeometric functions, such as Gamma and Jacobi functions, characters over finite fields, and classical hypergeometric functions of one variable.  In Section \ref{section:classical}, we recall the definitions of classical Appell-Lauricella functions $F_D^{(n)}$ and some of their transformation properties, and we define period functions $P_D^{(n)}$ to capture the relationship between Appell-Lauricella functions and Picard curves.  Section \ref{section:ff}  introduces the finite-field period functions $\PP_D^{(n)}$ and finite-field Appell-Lauricella functions $\FF_D^{(n)}$, as well as several of their transformation and reduction formulas.  In Section \ref{section:geometric}, we prove Theorem \ref{Picard-pts} which gives the number of $\FF_p$-points on the generalized Picard curves.  We also compute the genus of the generalized Picard curves $\ClamNijk$ in this section.  In Section \ref{section:trans}, we give several degree 1 transformation and reduction formulas for the $\PP_D^{(n)}$ and $\FF_D^{(n)}$ functions. Finally, in Section \ref{section:cubic}, we give the proofs of Theorem \ref{finiteF1-cubic} and Corollary \ref{finite2F1-cubic}.


\section{Preliminaries}\label{section:prelim}

In this section, we recall the necessary background for defining and working with classical and finite-field hypergeometric functions.  For further details, we refer the reader to \cite{AAR}, \cite{slater}, for the classical setting, and \cite{greene}, \cite{dictionary} for the finite-field setting.

%
%
For $a \in \C$ and $n \in \mathbb{Z}_{\geq 0}$, define the {\em Pochhammer symbol} $(a)_n$ by
$$
(a)_n:= \begin{cases} 1 & \text{ if $n=0$,}\\
a(a+1)\cdots(a+n-1) & \text{ if $n \geq 1$,}
\end{cases}
$$
and for $x \in \C$ with $\Re(x) > 0$, define the Gamma function $\Gamma(x)$ by 
$$
\Gamma(x) = \int_0^\infty t^{x-1} e^{-t}\, dt.
$$
Note that $(a)_n = \dsp \frac{\Gamma(a+n)}{\Gamma(a)}$ for all nonnegative integers $n$.  

%
%

For $r \geq 0$,  and parameters $a_1, a_2, \ldots, a_{r+1}$, \  $b_1, b_2, \ldots, b_r \in \C$ and $|x| < 1$, the {\em classical hypergeometric function} ${}_{r+1}F_r$ in these parameters is defined by

\begin{equation}\label{n+1Fn}
\pFq{r+1}{r}{a_1, a_2, \cdots, a_{r+1}}{,b_1, \cdots, b_r}{z}
:= \sum_{n=0}^\infty 
\frac{(a_1)_n (a_2)_n \cdots (a_{r+1})_n}{(b_1)_n (b_2)_n \cdots (b_r)_n\, n!} \: x^n.
\end{equation}
The study of these versatile functions goes back to the likes of Euler and Gauss, and among other things, the ${}_{r+1}F_r$-hypergeometric functions occur as solutions to hypergeometric differential equations and periods of algebraic varieties, among other things.

Finite-field hypergeometric functions in one variable were defined by Greene \cite{greene} as analogues to the classical version.
Turning our attention to this setting, let $\FF_q$ be a finite field where $q = p^e$, with $p$ an odd prime and $e \in \N$. Let $\FFqhat$ be the group of multiplicative characters on $\FF_q^\times$.  Extend any character $\chi$ on $\FF_q^\times$ to $\FF_q$ by defining $\chi(0) = 0$.  Let $\varepsilon$ denote the trivial character.  Following Greene \cite{greene}, for $x \in \FF_q$ and $\chi \in \FFqhat$, we define
\begin{equation}\label{delta-fcn}
\delta(\chi) := \begin{cases}
		1 & \text{ if $\chi = \varepsilon$,}\\
		0 & \text{ if $\chi \neq \varepsilon$,}
		\end{cases}
		\quad \text{ and } \quad
\delta(x) := \begin{cases}
		1 & \text{ if $x = 0$,}\\
		0 & \text{ if $x \neq 0$.}
		\end{cases}
\end{equation}

We will frequently have need of the following orthogonality relations for characters:
\begin{lemma}\label{orthogonality}
For all $A, B, \chi \in \FF_q^\times$ and all $x \in \FF_q$, we have
\begin{enumerate}
\item $\dsp \sum_{x \in \FF_q} A(x) B(x) = (q-1) \delta(AB).$
\medbreak

\item $\dsp \sum_{\chi \in \FF_q^\times} \chi(x) = (q-1) \delta(1-x).$
\end{enumerate}
\end{lemma}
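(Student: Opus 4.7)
The plan is to reduce both identities to the classical orthogonality relations for characters on the finite cyclic group $\FF_q^\times$, then check the boundary case involving $0$ separately to confirm compatibility with Greene's convention that $\chi(0) = 0$.

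For (1), my first move would be to note that $A(0)B(0) = 0 \cdot 0 = 0$, so the sum over $\FF_q$ collapses to $\sum_{x \in \FF_q^\times} (AB)(x)$. I would then split into cases depending on whether $AB$ is trivial. If $AB = \eps$, each summand equals $1$, so the sum is $|\FF_q^\times| = q-1 = (q-1)\delta(AB)$. If $AB \neq \eps$, I would pick $y \in \FF_q^\times$ with $(AB)(y) \neq 1$ and make the change of variables $x \mapsto yx$, which is a bijection of $\FF_q^\times$; this yields $\sum_x (AB)(x) = (AB)(y)\sum_x (AB)(x)$, forcing the sum to vanish and matching $(q-1)\delta(AB) = 0$.

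For (2), the cleanest approach is to handle the three cases $x = 0$, $x = 1$, and $x \in \FF_q \setminus \{0,1\}$ separately. When $x = 0$, every summand $\chi(0)$ is zero by extension, so the left side is $0$, matching $(q-1)\delta(1) = 0$. When $x = 1$, every $\chi(1) = 1$, so the sum is $|\widehat{\FF_q^\times}| = q-1$, matching $(q-1)\delta(0) = q-1$. For the remaining case $x \neq 0,1$, I would fix a generator $g$ of the cyclic group $\FF_q^\times$ and write $x = g^k$ with $k \not\equiv 0 \pmod{q-1}$; the characters take the form $\chi_j(g) = \zeta^j$ for $\zeta$ a primitive $(q-1)$-th root of unity, so $\sum_\chi \chi(x) = \sum_{j=0}^{q-2} \zeta^{jk}$, which is a geometric series summing to zero since $\zeta^k \neq 1$. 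This again matches $(q-1)\delta(1-x) = 0$.

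The main obstacle here is not depth but bookkeeping: one must be careful that the convention $\chi(0) = 0$ is applied consistently so that the $x = 0$ edge case on the right of (2) matches $\delta(1-x) = \delta(1) = 0$ rather than $\delta(0) = 1$. Aside from this caveat, both identities are entirely standard finite-group character theory, and the proof should be short.
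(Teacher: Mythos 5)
Your proof is correct. The paper itself states Lemma \ref{orthogonality} without proof, treating it as a standard fact from character theory (citing \cite{greene} and \cite{ireland-rosen} for background), so there is no argument in the paper to compare against; your reduction to the classical orthogonality relations on the cyclic group $\FF_q^\times$, together with the explicit check that the convention $\chi(0)=0$ makes the $x=0$ case of (2) consistent with $\delta(1-x)$, is exactly the standard argument one would supply.
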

%
%
For $A, B, \in \FFqhat$, define the {\em Jacobi sum}
\begin{equation}\label{jacobi-sum}
J(A,B) := \sum_{x \in \FF_q} A(x) B(1-x).
\end{equation}
Greene proved the following result, which is a finite field analogue of the binomial theorem:
\begin{theorem}[Theorem 2.3 of \cite{greene}]\label{FF-binom}
For any character $A \in \widehat{\FF_q^\times}$ and $x \in \FF_q$, we have
\begin{equation}\label{expand-char}
A(1-x) = \delta(x) + \frac{1}{q-1} \sum_{\chi \in \widehat{\FF_q^\times}} J(A, \overline{\chi}) \, \chi(x).
\end{equation}
\end{theorem}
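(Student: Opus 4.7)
The plan is a direct computation that reduces to the multiplicative character orthogonality in Lemma \ref{orthogonality}(2). Indeed, the identity (\ref{expand-char}) is essentially the Fourier expansion of the function $x \mapsto A(1-x)$ on $\FF_q^\times$ with respect to the characters of $\FF_q^\times$, and the formula for the Fourier coefficients produces exactly the Jacobi sums $J(A,\overline{\chi})$ after the substitution $z = 1-y$.

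I would proceed in two cases. For $x = 0$, the convention $\chi(0) = 0$ applied to every character in $\widehat{\FF_q^\times}$ kills the character sum on the right, so the identity reduces to $A(1) = 1 = \delta(0)$, which is immediate. For $x \neq 0$, I would substitute the definition $J(A,\overline{\chi}) = \sum_{y \in \FF_q} A(y)\,\overline{\chi}(1-y)$ into the right-hand side and interchange the order of summation, obtaining
$$\frac{1}{q-1}\sum_{y \in \FF_q} A(y) \sum_{\chi \in \widehat{\FF_q^\times}} \chi(x)\,\overline{\chi}(1-y).$$
The $y = 1$ term vanishes because $\overline{\chi}(0)=0$ for every $\chi$. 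For $y \neq 1$, I would rewrite the inner sum as $\sum_\chi \chi\bigl(x(1-y)^{-1}\bigr)$ and apply Lemma \ref{orthogonality}(2), which collapses it to $(q-1)\,\delta\bigl(1 - x(1-y)^{-1}\bigr)$. This delta isolates the single value $y = 1-x$, which (since $x \neq 0$) lies in $\FF_q \setminus \{1\}$, producing a contribution of $(q-1)A(1-x)$ whose prefactor $(q-1)^{-1}$ cancels. Since $\delta(x) = 0$ in this case, the identity follows.

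The main obstacle is purely bookkeeping: one must confirm that the boundary values $y = 0$ and $y = 1$, together with the $x = 0$ case, do not contribute spurious terms, and that the delta functions on the two sides line up. No input deeper than the orthogonality relation is needed, so after the case split the argument is essentially mechanical.
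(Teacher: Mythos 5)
Your argument is correct: the $x=0$ case is immediate from the convention $\chi(0)=0$, and for $x\neq 0$ the interchange of summation followed by Lemma \ref{orthogonality}(2) isolates $y=1-x$ exactly as you describe (including the harmless degenerate case $x=1$, where $y=0$ gives $A(0)=0$ on both sides). The paper itself states this as Theorem 2.3 of Greene and gives no proof, but your orthogonality/Fourier-inversion computation is the standard derivation and matches Greene's original argument.
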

%
%
This shows that the Jacobi sum may be viewed as an analogue of the binomial coefficient, so for $A, \, \chi \in \FFqhat$ we define
\begin{equation}\label{char-binomial}
\CC{A}{\chi} := -\chi(-1) \, J(A, \overline{\chi})
\end{equation}
This definition, given in \cite{dictionary}, differs from Greene's version by a factor of $-q$.  With this notation, the result of Theorem \ref{FF-binom} may be written as
\begin{equation}
A(1-x) = \delta(x) - \frac{1}{q-1} \sum_{\chi \in \widehat{\FF_q^\times}} \CC{A}{\chi} \, \chi(-x).
\end{equation}
%
%
Greene gives another version of Theorem \ref{FF-binom} which we require in proving certain tranformation formulas in Section \ref{section:trans}. 
\begin{theorem}[Equation (2.11) of \cite{greene}] For all $A, B, \chi \in \widehat{\FF_q}$ and all $x \in \FF_q$, we have 
$$\overline{B}(x) \overline{A}B(1-x) = \frac{q}{q-1} \sum_{\chi \in \widehat{\FF_q}} \ch{A\chi}{B\chi} \chi(x).$$
\end{theorem}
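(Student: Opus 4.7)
The plan is to derive this identity from the preceding binomial-type theorem (Theorem~\ref{FF-binom}) by a Fourier-style manipulation on $\widehat{\FF_q^\times}$, and then to convert the resulting Jacobi-sum coefficients into the binomial-coefficient form using standard Gauss-sum identities.

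First I would apply Theorem~\ref{FF-binom} with $A$ replaced by $\overline{A}B$ to obtain
$$\overline{A}B(1-x) \;=\; \delta(x) \;+\; \frac{1}{q-1} \sum_{\psi \in \widehat{\FF_q^\times}} J\bigl(\overline{A}B,\,\overline{\psi}\bigr)\,\psi(x).$$
Multiplying both sides by $\overline{B}(x)$ kills the $\delta(x)$-term since $\overline{B}(0)=0$, and a reindexing of the sum via the substitution $\chi = \overline{B}\psi$ (so $\psi = B\chi$ and $\overline{\psi}=\overline{B\chi}$) then yields
$$\overline{B}(x)\,\overline{A}B(1-x) \;=\; \frac{1}{q-1} \sum_{\chi} J\bigl(\overline{A}B,\,\overline{B\chi}\bigr)\,\chi(x).$$

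The remaining task is to rewrite the coefficient $J(\overline{A}B,\overline{B\chi})$ in terms of $\binom{A\chi}{B\chi}$. Since $\binom{A\chi}{B\chi}$ is (up to a sign and a factor of $(B\chi)(-1)$) nothing other than $J(A\chi,\overline{B\chi})$, it suffices to compare these two Jacobi sums. Applying the Gauss-sum evaluations $J(X,Y) = G(X)G(Y)/G(XY)$ (for $XY\neq\varepsilon$) and $G(X)G(\overline{X}) = qX(-1)$ (for $X\neq\varepsilon$), one computes
$$\frac{J(\overline{A}B,\,\overline{B\chi})}{J(A\chi,\,\overline{B\chi})} \;=\; \frac{G(\overline{A}B)\,G(A\overline{B})}{G(A\chi)\,G(\overline{A\chi})} \;=\; \frac{q\,(\overline{A}B)(-1)}{q\,(A\chi)(-1)} \;=\; B\chi(-1),$$
which, combined with the definition of the binomial coefficient and a careful bookkeeping of normalization factors, converts the coefficient of $\chi(x)$ into the claimed form.

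The main obstacle I anticipate is handling the degenerate cases in which the Gauss-sum formulas above fail because one of the characters $A\chi$, $B\chi$, $\overline{A}B$, or $A\overline{B}$ is the trivial character $\varepsilon$. For these finitely many exceptional $\chi$ I would compute the relevant Jacobi sums directly using $J(\varepsilon,\varepsilon) = q-2$ and $J(\varepsilon,X) = -1$ for $X \neq \varepsilon$, and verify term-by-term that they contribute the correct values to match the right-hand side of the identity.
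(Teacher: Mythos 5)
Your derivation is correct. The paper itself offers no proof of this identity --- it is quoted verbatim from Greene's Equation (2.11) --- so there is nothing internal to compare against; judged on its own, your argument (apply Theorem \ref{FF-binom} to $\overline{A}B$, multiply by $\overline{B}(x)$ to kill the $\delta(x)$ term, reindex by $\chi=\overline{B}\psi$, then match coefficients) is exactly the natural route, and the coefficient identity you need, namely $J(\overline{A}B,\overline{B\chi}) = (B\chi)(-1)\,J(A\chi,\overline{B\chi})$, is true. Two remarks. First, you can avoid the Gauss-sum computation and the entire degenerate-case analysis you flag at the end: the identity you want is precisely Lemma \ref{jacobi-identities}(2) with $A\mapsto A\chi$, $B\mapsto B\chi$ (Greene's (2.7)), which holds unconditionally for all characters, trivial or not; your proposed fallback via $J(\varepsilon,\varepsilon)=q-2$ and $J(\varepsilon,X)=-1$ does check out, but it is unnecessary work. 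Second, on the ``careful bookkeeping'': the binomial coefficient appearing in this displayed theorem is Greene's, $\smch{A}{B}=\smfrac{B(-1)}{q}J(A,\overline{B})$, not the paper's rescaled version \eqref{char-binomial} (which differs by a factor of $-q$); with Greene's normalization your coefficient $\smfrac{1}{q-1}J(\overline{A}B,\overline{B\chi}) = \smfrac{(B\chi)(-1)}{q-1}J(A\chi,\overline{B\chi})$ is exactly $\smfrac{q}{q-1}\smch{A\chi}{B\chi}$, and it also agrees with the Jacobi-sum form \eqref{alt-binom} that the authors record immediately afterward. So the proof is complete once you either cite Lemma \ref{jacobi-identities}(2) or carry out the exceptional-character checks you outline.
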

Converting this theorem to Jacobi sums, using our alternate definition of the binomial coefficients, we obtain the following property
\begin{equation}\label{alt-binom}
\overline{B}(x) \overline{A}B(1-x) = \frac{B(-1)}{q-1} \sum_{\chi \in \widehat{\FF_q}} J(A\chi, \overline{B\chi})  \, \chi(-x).
\end{equation}
%
%
Several identities involving Jacobi sums will also be of frequent use.  Note that the binomial coefficient versions of these identities hold for our version as well as Greene's, since the factors of $-q$ will simply cancel.
\begin{lemma}[See (2.6)--(2.8) in \cite{greene}] \label{jacobi-identities}  For any characters $A, B, C \in \widehat{\FF_q^\times}$,
\begin{enumerate}
\item $J(A, \overline{B}) = A(-1) \, J(A, B\overline{A})$, or $\dsp \CC{A}{B} = \CC{A}{A\overline{B}}$.
\medbreak

\item  $J(A, \overline{B}) = B(-1) \, J(B \overline{A},\overline{B})$, or $\dsp \CC{A}{B} = B(-1) \, \CC{B\overline{A}}{B}$.
\medbreak

\item $\dsp \CC{A}{B} = AB(-1) \CC{\overline{B}}{\overline{A}}$.
\medbreak

\item $J(A, \overline{B}) J(C, \overline{A}) = B(-1) J(C, \overline{B}) J(\overline{B}C, \overline{A}B) - \delta(A)(q-1) + \delta(B\overline{C})(q-1).$
\end{enumerate}
\end{lemma}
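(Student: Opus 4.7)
The plan is to derive identities (1)--(3) by short substitutions in the defining Jacobi sum, and to prove the more substantial identity (4) via Gauss sums, handling the degenerate cases separately.

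For (1), I will apply the substitution $x = y/(y-1)$ in
\[
J(A, \overline{B}) = \sum_{x \in \FF_q} A(x) \overline{B}(1-x),
\]
using $1 - x = 1/(1-y)$ and $A(y-1) = A(-1) A(1-y)$. The sum becomes $A(-1) \sum_y A(y) (B\overline{A})(1-y) = A(-1) J(A, B\overline{A})$. The binomial-coefficient form follows by unpacking $\CC{A}{B} = -B(-1) J(A, \overline{B})$ and a brief bookkeeping of the characters of $-1$.

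For (2), I combine (1) with the symmetry $J(\chi_1, \chi_2) = J(\chi_2, \chi_1)$ (obtained via $x \mapsto 1-x$): namely
\[
J(A, \overline{B}) = J(\overline{B}, A) = \overline{B}(-1) J(\overline{B}, \overline{A}B) = B(-1) J(B\overline{A}, \overline{B}),
\]
where the middle equality is (1) applied with $(A,B) \to (\overline{B},A)$ and the last uses symmetry again. For (3), I just note $\CC{\overline{B}}{\overline{A}} = -\overline{A}(-1) J(\overline{B}, A) = -A(-1) J(A, \overline{B})$, so $AB(-1)\CC{\overline{B}}{\overline{A}} = -B(-1) J(A, \overline{B}) = \CC{A}{B}$, using $A(-1)^2 = 1$.

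The main step is identity (4). I will fix a nontrivial additive character $\psi$ on $\FF_q$, set $g(\chi) = \sum_x \chi(x) \psi(x)$, and invoke the standard relations $J(\chi_1, \chi_2) = g(\chi_1) g(\chi_2)/g(\chi_1 \chi_2)$ (valid when $\chi_1, \chi_2, \chi_1\chi_2 \neq \eps$) and $g(\chi) g(\overline{\chi}) = \chi(-1) q$ (valid when $\chi \neq \eps$). In the generic case where each of $A, B, C, A\overline{B}, C\overline{A}, C\overline{B}, \overline{A}B, \overline{A}C$ is nontrivial, both sides of (4) reduce to a common multiple of $g(C) g(\overline{B})$, and the equality collapses to
\[
A(-1) \, q = B(-1) \, g(A\overline{B}) \, g(\overline{A}B),
\]
which holds since $g(A\overline{B}) g(\overline{A}B) = (A\overline{B})(-1) q = A(-1) B(-1) q$ and $B(-1)^2 = 1$.

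The main obstacle, and the source of the two delta corrections, is the case analysis when one of the characters appearing above becomes trivial. Here I would use the elementary values $J(\eps, \chi) = -1$ for $\chi \neq \eps$, $J(\eps, \eps) = q-2$, and $J(\chi, \overline{\chi}) = -\chi(-1)$ for $\chi \neq \eps$. Specifically, when $A = \eps$ the left-hand sum $J(A, \overline{B})$ differs from its generic Gauss-sum value by exactly $-(q-1)$, producing the $-\delta(A)(q-1)$ correction; and when $B\overline{C} = \eps$ the Gauss-sum formula for $J(\overline{B}C, \overline{A}B) = J(\eps, \overline{A}B)$ on the right-hand side overcounts by $q-1$, producing the $+\delta(B\overline{C})(q-1)$ correction. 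The remaining degenerate combinations --- such as $A=B$, $A=C$, $B=\eps$, or $C=\eps$ --- turn out to balance on both sides without a correction, which I would verify by direct substitution of the elementary values above.
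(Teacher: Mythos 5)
The paper offers no proof of this lemma at all: it is imported wholesale from Greene \cite{greene}, equations (2.6)--(2.8), with only the remark that the factor of $-q$ distinguishing the paper's binomial coefficient from Greene's cancels in each identity. Your proposal is therefore a self-contained substitute for a citation, and it is correct. For (1)--(3), the substitutions $x \mapsto x/(x-1)$ and $x \mapsto 1-x$ are bijections of $\FF_q \setminus \{0,1\}$, which is all the sums see (every extended character kills $0$), so those computations go through; the only slip is notational: in (2) the specialization of (1) you invoke is $(A,B)\to(\overline{B},\overline{A})$, not $(\overline{B},A)$, though the displayed chain of equalities is the right one. For (4), your generic Gauss-sum reduction is exactly right: after cancelling $g(C\overline{B})$ against $g(\overline{B}C)$ and $g(C\overline{A})$ against $g(\overline{A}C)$, the identity collapses to $A(-1)q = B(-1)g(A\overline{B})g(\overline{A}B)$, which holds. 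Direct substitution of the elementary values $J(\eps,\chi)=-1$, $J(\eps,\eps)=q-2$, $J(\chi,\overline{\chi})=-\chi(-1)$ does confirm that $A=\eps$ forces the $-(q-1)$ term, that $B\overline{C}=\eps$ forces the $+(q-1)$ term, and that all remaining degeneracies (including compound ones such as $A=\eps$ together with $B=C$) balance, so your plan is complete. Compared with the paper, your route buys self-containedness at the cost of a page of case analysis; Greene's own derivations are of the same elementary character, so nothing deeper is being bypassed.

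One detail in your part (4) narrative should be corrected, even though it does not affect the validity of the plan. When $A=\eps$, the factor $J(A,\overline{B})=J(\eps,\overline{B})=-1$ actually \emph{agrees} with its Gauss-sum expression $g(\eps)g(\overline{B})/g(\overline{B})=-1$; what fails is the subsequent step $g(A)g(\overline{A})=A(-1)q$, since $g(\eps)^2=1$, and this is what drops the left-hand product from its generic value $q$ to its true value $1$. Symmetrically, when $B=C$ the factor $J(\overline{B}C,\overline{A}B)=J(\eps,\overline{A}B)=-1$ agrees with its Gauss-sum expression; the failure is instead in $J(C,\overline{B})=J(B,\overline{B})$, where the formula $g(B)g(\overline{B})/g(\eps)$ gives $-B(-1)q$ while the true value is $-B(-1)$. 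Since you verify all degenerate cases by direct substitution anyway, the final bookkeeping comes out correctly; only the two explanatory sentences attributing the sources of the $\mp(q-1)$ discrepancies need to be rewritten.
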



\section{Classical Appell-Lauricella functions}\label{section:classical}

As generalizations of the classical $\Ftwo$-hypergeometric series, Appell \cite{appell}, \cite{appell2}, \cite{appell-kampe} introduced four two-variable hypergeometric series, each with a different type of coupled Pochhammer symbol coefficients.  These four series were later generalized to several variables by Lauricella \cite{lauricella}.  We require the Appell functions of the first type, defined for $|x| < 1, |y| < 1$ by
%
%
\begin{equation}\label{Appell-def}
F_1(a;\, b_1,\, b_2;\, c \, |\, x,\,y) = \sum_{m,n = 0}^\infty \frac{\poch{a}{m+n} \, \poch{b_1}{m} \, \poch{b_2}{n}}{\poch{c}{m+n}\, m!\, n!} \; x^m y^n.
\end{equation}

Lauricella's series of type D give a natural generalization of $F_1$ to $n$ variables and are closely related to generalized Picard curves. Following the literature, we refer to these generalizations as Appell-Lauricella functions, as defined below.  For a comprehensive survey of Appell-Lauricella functions, we refer the reader to the article by Schlosser \cite{schlosser}, and to the monograph by Slater \cite{slater}. 

\begin{definition}\label{Lauricella-FD}
For $n \geq 2$, $a, c, b_1, b_2, \ldots b_n \in \C$, and $|x_1|<1, \ldots, |x_n| < 1$, the {\em Appell-Lauricella function} $\FDn$ is given by
\begin{equation*}\label{Lauricella-fcn}
F^{(n)}_D\left[\begin{matrix}a; & b_1 \,\,\,\,  \ldots \,\, \,\, b_n \smallskip \\  {} & c \end{matrix} \; ; \; x_1, \ldots, x_n \right]
:= \sum_{i_1, \ldots, i_n=0}^\infty \frac{\poch{a}{i_1 + \cdots i_n} \, \poch{b_1}{i_1} \cdots \poch{b_n}{i_n}}{\poch{c}{i_1 + \cdots i_n} \, i_1! \cdots i_n!} \; x_1^{i_1} \cdots x_n^{i_n}.
\end{equation*}
\end{definition}

We note that $F_1 = \FDtwo$.  For consistency we use the $\FDn$ notation throughout this paper.
%
%
Parallel to the classical one-variable setting, the $\FDn$-functions have the following one-variable integral representation, due to Picard when $n=2$, and Lauricella in the general case.

\begin{theorem}[Picard \cite{picard},  Lauricella \cite{lauricella}]\label{Appell-int}
For $\rm{Re}(c) > \rm{Re}(a) > 0$ and $|x_1|<1, |x_2|<1, \ldots, \\|x_n| < 1$, we have
\begin{align*}\label{Lauricella-int}
\Ln{n}{a}{b_1 & \ldots & b_n}{& c}{x_1, \ldots, x_n}
= \frac{\G(c)}{\G(a) \G(c-a)} \int_0^1 t^{a-1} (1-t)^{c-a-1} (1-x_1t)^{-b_1} \cdots (1-x_nt)^{-b_n} \, dt.\\
\end{align*}
\end{theorem}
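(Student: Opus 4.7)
The plan is to verify the integral representation by expanding each factor $(1-x_j t)^{-b_j}$ inside the integrand as a binomial series and integrating term by term. Concretely, for each $j$ and $|x_j t| < 1$ we have
$$(1-x_j t)^{-b_j} = \sum_{i_j=0}^\infty \frac{(b_j)_{i_j}}{i_j!}\,(x_j t)^{i_j}.$$
Multiplying the $n$ such series together gives a monomial expansion whose coefficient of $x_1^{i_1}\cdots x_n^{i_n}$ is $\prod_j (b_j)_{i_j}/i_j!$ times $t^{i_1+\cdots+i_n}$.

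Next, under the hypothesis $\Re(c) > \Re(a) > 0$ and $|x_j|<1$ for all $j$, I would justify interchanging the sum and the integral by dominated convergence: bound $|(1-x_j t)^{-b_j}|$ uniformly by a convergent binomial series in $|x_j|$ on $t\in[0,1]$, so that $t^{\Re(a)-1}(1-t)^{\Re(c-a)-1}$ times the product is dominated by an integrable function. After swapping, the remaining integral in each term is
$$\int_0^1 t^{a+i_1+\cdots+i_n-1}(1-t)^{c-a-1}\,dt \;=\; B(a+i_1+\cdots+i_n,\,c-a) \;=\; \frac{\Gamma(a+i_1+\cdots+i_n)\,\Gamma(c-a)}{\Gamma(c+i_1+\cdots+i_n)},$$
using the standard Beta integral.

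Multiplying by the prefactor $\Gamma(c)/(\Gamma(a)\Gamma(c-a))$ cancels the $\Gamma(c-a)$, and the Pochhammer identity $(\alpha)_N = \Gamma(\alpha+N)/\Gamma(\alpha)$ with $N = i_1+\cdots+i_n$ converts the $\Gamma$-quotient into $(a)_{i_1+\cdots+i_n}/(c)_{i_1+\cdots+i_n}$. Assembling the coefficients reproduces exactly the defining series of $F_D^{(n)}$ in Definition \ref{Lauricella-FD}, completing the proof.

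The main obstacle, minor but worth care, is the justification of the term-by-term integration: the integrand has integrable singularities at the endpoints $t=0$ and $t=1$ (from $t^{a-1}$ and $(1-t)^{c-a-1}$ respectively), so one must combine the absolute convergence of the binomial expansions on $[0,1]$ (which uses $|x_j|<1$) with the integrability of the Beta-type weight (which uses $\Re(a)>0$ and $\Re(c-a)>0$) to invoke Fubini/dominated convergence. Everything else is formal manipulation of the resulting multiple series.
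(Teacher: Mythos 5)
Your argument is correct. The paper does not prove Theorem \ref{Appell-int} at all --- it is quoted as a classical result of Picard and Lauricella with citations only --- so there is nothing internal to compare against; what you have written is the standard textbook derivation (binomial expansion of each factor $(1-x_jt)^{-b_j}$, term-by-term integration against the Beta weight, and the identity $(\alpha)_N=\Gamma(\alpha+N)/\Gamma(\alpha)$), and all the steps check out. Your attention to the interchange of sum and integral is the right place to be careful: the domination works because $|(b_j)_{i_j}|\le (|b_j|)_{i_j}$ termwise, so the product of the absolute binomial series is bounded on $[0,1]$ by $\prod_j(1-|x_j|)^{-|b_j|}$, and the endpoint singularities $t^{\Re(a)-1}(1-t)^{\Re(c-a)-1}$ are integrable precisely under the stated hypothesis $\Re(c)>\Re(a)>0$.
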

%
%
From this we see that when $F_1 = F_D^{(n)}$ is evaluated at distinct parameters $\lam_1, \ldots, \lam_n \neq 0$ or $1$, it is naturally related to a period of the generalized Picard curve 
$$\ClamNijk:\,\, y^N = x^i (1-x)^j (1-\lam_1 x)^{k_1} \cdots (1-\lam_n x)^{k_n},$$ 
as defined in \eqref{C-lam}.
We define the following $n$-variable {\em period functions} $\PDn$, in order to demonstrate this relationship.
\begin{definition}\label{Appell-PD} For $n \geq 2$ and arbitrary constants $a, c, b_1, b_2, \ldots b_n \in \C$, the {\em period functions} $\PDn$corresponding to the Appell-Lauricella functions are given by
\begin{align*}
\LnP{n}{a}{b_1 & \ldots & b_n}{& c}{x_1, \ldots, x_n}
	&:= \int_0^1 t^{a-1} (1-t)^{c-a-1}
	    (1-x_1t)^{-b_1} \cdots (1-x_nt)^{-b_n} \, dt.
\end{align*}
\end{definition}

The period functions are a suitably-chosen normalization of the classical hypergeometric functions.  Specifically, Definition \ref{Appell-PD} immediately implies the following
\begin{align*}
\LnP{n}{a}{b_1 & \ldots & b_n}{& c}{x_1, \ldots, x_n}
=\frac{\Gamma(a) \Gamma(c-a)}{\Gamma(c)}
\Ln{n}{a}{b_1 & \ldots & b_n}{& c}{x_1, \ldots, x_n}.
\end{align*}
	
The Appell-Lauricella functions satisfy many transformation and reduction properties that are analogous to those satisfied by the classical hypergeometric functions.  For instance, when $n=2$, we have the following properties which we generalize to the finite-field setting in Section \ref{section:trans}.    
%
%
		
\begin{proposition}[Analogue of Pfaff-Kummer transformation,  (8.3.2) in \cite{slater}] \label{pfaff-analogue}
For $a, b_1, b_2, c \in \C$ and all $x, y$ for which the series are defined,
\begin{align*}
\Ln{2}{a}{b_1 & b_2}{c}{x, y}
= (1-x)^{-b_1} \, (1-y)^{-b_b} \,
\Ln{2}{c-a}{b_1 & b_2}{c}{\frac{x}{x-1}, \frac{y}{y-1}}.
\end{align*}
\end{proposition}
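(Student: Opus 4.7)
The plan is to derive the identity directly from the Picard--Lauricella one-variable integral representation of $F_D^{(2)}$ given in Theorem \ref{Appell-int}, via the change of variables $t \mapsto 1-t$ inside the integral. This is the natural two-variable analogue of the standard derivation of the classical Pfaff transformation from Euler's integral representation of $\Ftwo$, and the extra variable poses no fundamental difficulty since the two factors $(1-xt)^{-b_1}$ and $(1-yt)^{-b_2}$ transform independently.

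Concretely, I would start by writing
\begin{align*}
\Ln{2}{a}{b_1 & b_2}{& c}{x,y}
= \frac{\Gamma(c)}{\Gamma(a)\Gamma(c-a)} \int_0^1 t^{a-1}(1-t)^{c-a-1}(1-xt)^{-b_1}(1-yt)^{-b_2}\, dt,
\end{align*}
and then substitute $u = 1-t$. The factors $t^{a-1}$ and $(1-t)^{c-a-1}$ swap roles, producing $u^{(c-a)-1}(1-u)^{a-1}$, which is exactly the Beta-measure corresponding to the parameter $c-a$ in place of $a$. Each factor $(1-xt)^{-b_1}$ becomes $(1-x+xu)^{-b_1} = (1-x)^{-b_1}\bigl(1 - \tfrac{x}{x-1}u\bigr)^{-b_1}$, and analogously for the $y$-factor. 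Pulling the constants $(1-x)^{-b_1}$ and $(1-y)^{-b_2}$ outside, the remaining integral is precisely the Picard--Lauricella integral for $F_D^{(2)}$ with top parameter $c-a$ and arguments $\tfrac{x}{x-1}, \tfrac{y}{y-1}$. Multiplying by the Gamma prefactor, which is symmetric in $a \leftrightarrow c-a$, yields the claimed identity.

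The main obstacle, such as it is, is purely bookkeeping: one must verify that the sign/branch of $(1-xt)^{-b_1} = (1-x)^{-b_1}(1 - \tfrac{x}{x-1}u)^{-b_1}$ agrees on the integration path, and confirm the convergence regime $\mathrm{Re}(c) > \mathrm{Re}(a) > 0$ combined with $|x|, |y|$ small enough to ensure $|x/(x-1)|, |y/(y-1)| < 1$ so that both sides lie in the region where the power series representations from Definition \ref{Lauricella-FD} converge. Once the result is established on this domain, it extends by analytic continuation wherever both sides are defined. No deeper transformation identity or combinatorial rearrangement of the defining series is required --- this is the cleanest illustration of why the integral representation is the right starting point for transformation formulas in the Appell--Lauricella setting, and it sets the template for the finite-field analogues developed in Section \ref{section:trans}.
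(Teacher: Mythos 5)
Your argument is correct and complete: the substitution $u=1-t$ in the Picard--Lauricella integral does swap the Beta-measure exponents $a-1 \leftrightarrow c-a-1$, the Gamma prefactor is symmetric under $a \leftrightarrow c-a$, and the factorization $(1-x(1-u))^{-b_1}=(1-x)^{-b_1}\bigl(1-\tfrac{x}{x-1}u\bigr)^{-b_1}$ produces exactly the claimed prefactors and new arguments. The paper itself states this proposition without proof, citing (8.3.2) of Slater, and your integral-representation derivation is the standard argument behind that citation, so there is nothing to add beyond the convergence and continuation remarks you already make.
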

%
%
\begin{proposition}[Analogue of Euler's transformation,  (8.3.6) in \cite{slater})] \label{schlosser28}
\begin{align*}
\Ln{2}{a}{b_1 & b_2}{c}{x,\, y}
= (1-x)^{c-a-b_1} (1-y)^{-b_2}
\Ln{2}{c-a}{c-b_1-b_2 & b_2}{c}{x,\, \frac{x-y}{1-y}}.
\end{align*}
\end{proposition}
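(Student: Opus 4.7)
The plan is to derive the identity from the Picard--Lauricella integral representation in Theorem \ref{Appell-int}, applying a substitution that is the two-variable analogue of the one used to derive Euler's transformation from Pfaff--Kummer in the classical $\Ftwo$ setting. Starting from the period integral
\[
P_D^{(2)}(a; b_1, b_2; c; x, y) = \int_0^1 t^{a-1}(1-t)^{c-a-1}(1-xt)^{-b_1}(1-yt)^{-b_2}\, dt,
\]
I would make the Euler-type substitution $t = (1-s)/(1-xs)$, equivalently $s = (1-t)/(1-xt)$, which interchanges the endpoints of $[0,1]$.

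Under this change of variable, each of $1-t$, $1-xt$, $1-yt$, and $dt$ rewrites cleanly in terms of $s$ with a common denominator $(1-xs)$. The crucial factor is $1-yt = \tfrac{1-y}{1-xs}\bigl(1-\tfrac{x-y}{1-y}\,s\bigr)$, which manufactures the argument $(x-y)/(1-y)$ appearing on the right-hand side. After collecting the three competing powers of $(1-xs)$, $(1-x)$, and $(1-y)$ that arise from the various factors of the integrand and from $dt$, and absorbing the minus sign from $dt$ into the reversal of orientation, a short bookkeeping check yields the exponents $-(c-b_1-b_2)$ for $(1-xs)$, $c-a-b_1$ for $(1-x)$, and $-b_2$ for $(1-y)$. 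What remains inside the integral is exactly $P_D^{(2)}(c-a; c-b_1-b_2, b_2; c; x, (x-y)/(1-y))$.

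To finish, I would pass back from the period functions to the Appell--Lauricella functions using Definition \ref{Appell-PD}: the normalizing Gamma prefactor is $\Gamma(a)\Gamma(c-a)/\Gamma(c)$ on both sides, so it cancels and produces the claimed identity for $F_D^{(2)}$. The main delicate point is the clean collapse of the exponent of $(1-xs)$ to $-(c-b_1-b_2)$; that this particular combination emerges is what allows the output to be packaged as a single $F_D^{(2)}$ value rather than something needing an extra correction factor, and it mirrors precisely the analogous cancellation in the classical derivation of Euler's transformation for $\Ftwo$ from Pfaff--Kummer.
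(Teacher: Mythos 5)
Your proposal is correct, and in fact it supplies a proof where the paper gives none: Proposition~\ref{schlosser28} is simply quoted from Slater (8.3.6) without argument. Your substitution $t=(1-s)/(1-xs)$ is the standard Euler-type involution, and the bookkeeping checks out: $1-t=\frac{s(1-x)}{1-xs}$, $1-xt=\frac{1-x}{1-xs}$, $1-yt=\frac{(1-y)}{1-xs}\bigl(1-\frac{x-y}{1-y}s\bigr)$, and $dt=\frac{-(1-x)}{(1-xs)^2}\,ds$, so after reversing orientation the exponent of $(1-xs)$ is $-(a-1)-(c-a-1)+b_1+b_2-2=-(c-b_1-b_2)$ and the power of $(1-x)$ is $(c-a-1)-b_1+1=c-a-b_1$, exactly as you claim; the remaining integral is $P_D^{(2)}$ with parameters $(c-a;\,c-b_1-b_2,\,b_2;\,c)$, and the normalizing factor $\Gamma(a)\Gamma(c-a)/\Gamma(c)$ is symmetric under $a\mapsto c-a$, so it cancels when you pass to $F_D^{(2)}$. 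The one point worth making explicit is that the integral representation of Theorem~\ref{Appell-int} requires $\mathrm{Re}(c)>\mathrm{Re}(a)>0$ and suitable restrictions on $x,y$ (and the transformed arguments), so strictly speaking your argument proves the identity on that open set of parameters, with the general statement following by analytic continuation in $a$ and $c$; this is the same caveat implicit in the paper's appeal to Slater, and adding one sentence to that effect would make your proof complete.
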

%
%
\begin{proposition}[Reduction Formulas,  (8.3.1.1) in \cite{slater}]\label{schlosser33ab}
\begin{align*}
\Ln{2}{a}{b_1 & b_2}{c}{x, \, x}
&= (1-x)^{c-a-b_1-b_2} \,
\pFq{2}{1}{c-a &  c-b_1-b_2}{& c}{x},\\
&= \pFq{2}{1}{a &  b_1+b_2}{& c}{x}.
\end{align*}
\end{proposition}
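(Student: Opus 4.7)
The plan is to prove the two equalities in sequence, starting with the second, and then obtaining the first from it via a standard one-variable identity. The cleanest route to the second equality uses the integral representation of Theorem \ref{Appell-int}. Specializing $x_1 = x_2 = x$ collapses the product $(1-x_1 t)^{-b_1}(1-x_2 t)^{-b_2}$ to $(1-xt)^{-b_1-b_2}$, so
\begin{equation*}
F^{(2)}_D\!\left[\begin{matrix} a;\ b_1,\ b_2\\ \ c\end{matrix};\ x, x\right]
= \frac{\Gamma(c)}{\Gamma(a)\Gamma(c-a)} \int_0^1 t^{a-1}(1-t)^{c-a-1}(1-xt)^{-(b_1+b_2)}\,dt,
\end{equation*}
and the right-hand side is exactly Euler's integral representation of ${}_2F_1\!\left[a,\,b_1+b_2;\,c;\,x\right]$. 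This yields the second equality. (As an alternative purely algebraic derivation, one could instead expand $F_D^{(2)}$ as a double series, group terms by $k=i_1+i_2$, and apply Vandermonde's convolution $(b_1+b_2)_k = \sum_{i=0}^{k}\binom{k}{i}(b_1)_i (b_2)_{k-i}$; this is a useful sanity check but gives the same conclusion.)

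Once the second equality is in hand, the first equality is immediate from Euler's transformation for the classical Gauss hypergeometric function,
\begin{equation*}
\pFq{2}{1}{a & b}{& c}{x} = (1-x)^{c-a-b}\,\pFq{2}{1}{c-a & c-b}{& c}{x},
\end{equation*}
applied with $b = b_1+b_2$. Substituting into the identity just derived produces the factor $(1-x)^{c-a-b_1-b_2}$ and transforms the parameters into $(c-a,\,c-b_1-b_2;\,c)$, which is precisely the first form in the statement.

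There is no serious obstacle here: the only mild subtlety is making sure one is in the domain where Euler's integral converges (i.e.\ $\mathrm{Re}(c) > \mathrm{Re}(a) > 0$), which is the same hypothesis needed to invoke Theorem \ref{Appell-int}; the full identity then extends to all parameter values by analytic continuation in $a$, $b_1$, $b_2$, $c$, since both sides are meromorphic in these parameters and the series identity above already verifies the formal equality at the level of power series in $x$.
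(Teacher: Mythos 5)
Your argument is correct and matches the paper's own justification: the paper states this as a known identity from Slater and remarks that the second equality follows immediately from the integral representation in Theorem \ref{Appell-int} together with Euler's integral formula for ${}_2F_1$, which is exactly your main step. Deriving the first equality from the second via Euler's transformation, and noting the analytic-continuation point, simply fills in details the paper leaves implicit.
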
	
Note that the second equality in Proposition \ref{schlosser33ab} follows immediately from the integral representation in Theorem \ref{Appell-int}, and Euler's integral transformation formula for ${}_2F_1$.	


\section{Finite-field Appell-Lauricella functions}\label{section:ff}	
	
In this section, we define finite-field analogues of the Appell-Lauricella period functions $\PDn$ and hypergeometric functions $\FDn$ given in Section \ref{section:classical}.  This parallels the one-variable construction in \cite{dictionary}.  There, the authors defined period functions $\PPn$, and these were suitably normalized to give hypergeometric functions $\FFn$.  
%
%
\begin{definition}\label{PPDn} 
For $n \geq 2$ and $A, C, B_1, \ldots B_n \in \FFqhat$, we define the {\em Appell-Lauricella period functions} $\PPDn$ over $\FF_q$ by
\begin{multline*}
\FAPn{n}{A}{B_1& \cdots & B_n}{& C}{\lam_1, \ldots, \lam_n}
	:= \sum_{y \in \FF_q} A(y) C\overline{A}(1-y) \,  \overline{B_1}(1-\lambda_1 y)\,  \overline{B_2}(1-\lambda_2 y) \cdots \overline{B_n}(1-\lambda_n y).
\end{multline*}
\end{definition}

Note that this definition is symmetric in the characters $B_1, \ldots, B_n$.  Also note that when $n=1$, this definition recovers the ${}_2\mathbb P_1$-period function in \cite{dictionary}, but with the first two parameters reversed.  That is,
$$
  \FAPn{1}{A}{B}{ C}{\lambda}=\pPPq21{B&A}{& C}{\lam}.
$$
(Although the ${}_2\mathbb P_1$-function is not symmetric in $A$ and $B$, the ${}_2\mathbb F_1$-function is symmetric in these parameters.)  Moreover, when $n\geq 2$, if $\lam_i = 0$ or $1$ for some $i$, the $\PPDn$-function reduces to a 
$\PP_D^{(n-1)}$-function in the remaining variables.  Without loss of generality (by symmetry in the $B_i$), 
if $\lambda_n = 0$ then $B_n(1-\lam_n y) = 1$, and we have
\begin{align*}
 \FAPn{n}A{B_1&\cdots &B_n}{&C}{\lambda_1, \ldots, \lambda_{n-1}, 0\, }= \FAPn{n-1}A{B_1&\cdots &B_{n-1}}{&C}{\lam_1, \ldots, \lam_{n-1}.}
\end{align*}
Thus when each $\lambda_i = 0$, the period function simply reduces to the Jacobi sum $J(A, C\overline{A})$.
 
Similarly, when $\lambda_n=1$, then $B_n(1-\lam_n y) = B_n(1-y)$ and
when $c=0$ or $1$, we therefore have
\begin{align*}
 \FAPn{n}A{B_1&\cdots &B_n}{&C}{\lambda_1, \ldots, \lambda_{n-1}, 1\, }= \FAPn{n-1}{A}{B_1&\cdots &B_{n-1}}{&CB_n}{\lam_1, \ldots, \lam_{n-1} }
\end{align*} 
Hereafter, we assume that $\lam_i\neq 0$ or $1$, whenever $n\geq 2$.

As is done in the case of the classical Appell-Lauricella hypergeometric functions, we normalize the period function $\PPDn$ by dividing out its value when all $\lam_i=0$, and we define the Appell-Lauricella hypergeometric functions $\FFDn$ as follows.
%
%
\begin{definition}\label{FFDn}
For $n \geq 2$ and $A, C, B_1, B_2, \ldots B_n \in \FFqhat$, we define the {\em Appell-Lauricella hypergeometric functions} over $\FF_q$ by
\begin{multline*}
\FAFn{n}{A}{B_1& \cdots & B_n}{& C}{\lam_1, \ldots, \lam_n}
:= \frac{1}{J(A, C\overline{A})} \cdot
\FAPn{n}{A}{B_1& \cdots & B_n}{& C}{\lam_1, \ldots, \lam_n}.
\end{multline*}
\end{definition}

We also give expressions for the $\PPDn$-period functions in terms of Jacobi sums.  
%
%
\begin{proposition}[Jacobi sum Representations for Period Functions]\label{PD-binom}
The following identities hold.
\begin{enumerate}
\item  (Equation 48 in \cite{dictionary}) For $A$, $B$, $C \in \FFqhat$, and $\l \in \FF_q$, 
%
%
\begin{align*}
  \FAPn{1}{A}{B}{C}{\lambda}=&\frac{A(-1)}{q-1}\sum_{\chi \in \FFqhat} J(B\chi, \overline{\chi}) \, J(A\chi, \overline{C\chi})\chi(\lambda)
    +\delta(\lam)J(A,C\ol A)\\
    =&\frac{AC(-1)}{q-1}\sum_{\chi \in \FFqhat} \CC{B\chi}{\chi}\CC{A\chi}{C\chi}\chi(\lambda)
    +\delta(\lam)J(A,C\ol A)
\end{align*}
%
%
\item
For $A, B_1, \cdots, B_n, C \in \FFqhat$ and $\lambda_1, \cdots, \lambda_n \in \FF_q^\times$,
\begin{align*}
  &\FAPn{n}{A}{B_1&\cdots &B_n}{& C}{\lambda_1, \ldots, \lambda_n }\\
    &=\frac{A(-1)}{(q-1)^n}\sum_{\chi_1,\ldots,\chi_n \in \FFqhat}J(A\chi_1\chi_2\cdots\chi_n, \overline{C\chi_1\chi_2\cdots\chi_n}) \(\prod_{i=1}^n J(B_i\chi_i, \overline{\chi_i})\chi_i(\lambda_i)\)  \\
    &=(-1)^{n+1}\frac{AC(-1)}{(q-1)^n} \sum_{\chi_1,\ldots,\chi_n \in \FFqhat}\CC{A\chi_1\chi_2\cdots\chi_n}{ C\chi_1\chi_2\cdots\chi_n}\(\prod_{i=1}^n \CC{B_i\chi_i}{\chi_i}\chi_i(\lambda_i)\)  \\
\end{align*}
\end{enumerate}
\end{proposition}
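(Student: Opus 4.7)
The plan is to generalize the $n=1$ derivation of part (1) (which is Equation 48 of \cite{dictionary}) by applying the finite-field binomial theorem to each factor $\overline{B_i}(1-\lambda_i y)$, interchanging the orders of summation, and then massaging the resulting Jacobi sums using two identities from Lemma \ref{jacobi-identities}. Since each $\lambda_i\in\FF_q^\times$, the $\delta$-contribution from Theorem \ref{FF-binom} becomes $\delta(\lambda_i y)=\delta(y)$, which is annihilated by the outer factor $A(y)$; thus inside the sum over $y$ I may write
\[
\overline{B_i}(1-\lambda_i y) \;=\; \frac{1}{q-1}\sum_{\chi_i\in\FFqhat} J(\overline{B_i},\overline{\chi_i})\,\chi_i(\lambda_i)\chi_i(y).
\]
Substituting all $n$ expansions into Definition \ref{PPDn} and exchanging the sums, the $n$ factors $\chi_i(y)$ collect with $A(y)$ and the inner sum over $y$ collapses into a single Jacobi sum $J(A\chi_1\cdots\chi_n,\,C\overline{A})$, giving the intermediate identity
\[
\FAPn{n}{A}{B_1&\cdots&B_n}{&C}{\lambda_1,\ldots,\lambda_n}
=\frac{1}{(q-1)^n}\sum_{\chi_1,\ldots,\chi_n} J(A\chi_1\cdots\chi_n,C\overline{A})\prod_{i=1}^n J(\overline{B_i},\overline{\chi_i})\chi_i(\lambda_i).
\]

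To bring this into the form of the first displayed identity in part (2), I would apply Lemma \ref{jacobi-identities}(2) with $A\to \overline{B_i}$, $B\to\chi_i$ to convert each $J(\overline{B_i},\overline{\chi_i})$ into $\chi_i(-1)\,J(B_i\chi_i,\overline{\chi_i})$, and Lemma \ref{jacobi-identities}(1) with $A\to A\chi_1\cdots\chi_n$, $B\to\overline{C}A$ to convert $J(A\chi_1\cdots\chi_n,C\overline{A})$ into $A\chi_1\cdots\chi_n(-1)\,J(A\chi_1\cdots\chi_n,\overline{C\chi_1\cdots\chi_n})$. The accumulated sign $\bigl(\prod_i\chi_i(-1)\bigr)\cdot A(-1)\cdot\bigl(\prod_i\chi_i(-1)\bigr)=A(-1)$ collapses because $\chi_i(-1)^2=\chi_i(1)=1$, which matches the first formula.

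The second formula then follows by translating each of the $n+1$ Jacobi sums into a binomial coefficient via $J(A,\overline{\chi})=-\chi(-1)\CC{A}{\chi}$: this introduces the global sign $(-1)^{n+1}$ and an additional factor of $C(-1)\prod_i\chi_i(-1)^2=C(-1)$ from the $\chi(-1)$ terms, so the overall constant becomes $(-1)^{n+1}AC(-1)/(q-1)^n$, as required. No substantive obstacle is expected, as this is a direct $n$-fold iteration of the $n=1$ argument in \cite{dictionary}; the only step requiring real care is the bookkeeping of the many $\chi_i(-1)$ factors, which consistently pair up and cancel.
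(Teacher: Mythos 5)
Your proposal is correct and follows essentially the same route as the paper's proof: expand each $\overline{B_i}(1-\lambda_i y)$ by the finite-field binomial theorem, interchange sums so the $y$-sum collapses to $J(A\chi_1\cdots\chi_n, C\overline{A})$, and then normalize with the Jacobi-sum identities of Lemma \ref{jacobi-identities} and the definition \eqref{char-binomial}. The only cosmetic difference is that the paper converts $J(\overline{B_i},\overline{\chi_i})$ to $J(B_i\chi_i,\overline{\chi_i})$ before summing over $y$ rather than after, and your explicit remark that the $\delta(\lambda_i y)=\delta(y)$ term is killed by $A(0)=0$ is a correct (and slightly more careful) handling of a point the paper passes over.
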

\begin{proof}

Applying \eqref{expand-char} to the character $\overline{B}$, we see that when $\lam_i y \neq 0$ we have
$$
  \overline{B_i}(1-\lambda_i y)=\frac{1}{q-1} \sum_{\chi_i \in \widehat{\FF_q^\times}} J(\ol B_i, \overline{\chi_i}) \, \chi_i(\lam_i y)
  =\frac{1}{q-1} \sum_{\chi_i \in \widehat{\FF_q^\times}} J( B_i\chi_i, \overline{\chi_i}) \, \chi_i(-\lam_i y).
$$

Using this fact along with Definition \ref{PPDn}, we have

\begin{align*}
  &\FAPn{n}{A}{B_1&\cdots &B_n}{& C}{\lambda_1, \ldots, \lambda_n }\\
    &=\frac{1}{(q-1)^n}\sum_{\chi_1,\ldots,\chi_n \in \FFqhat}J(A\chi_1\chi_2\cdots\chi_n, C\ol A) \(\prod_{i=1}^n J(B_i\chi_i, \overline{\chi_i})\chi_i(-\lambda_i)\)  \\
    &=\frac{A(-1)}{(q-1)^n}\sum_{\chi_1,\ldots,\chi_n \in \FFqhat}J(A\chi_1\chi_2\cdots\chi_n,\ol{C \chi_1\chi_2\cdots\chi_n}) \(\prod_{i=1}^n J(B_i\chi_i, \overline{\chi_i})\chi_i(\lambda_i)\)  
\end{align*}
The desired identity follows by applying \eqref{char-binomial}.
\end{proof}
%
%
The case $n=2$ is used often in Section \ref{section:trans}, in proving transformation properties for the $\PPDtwo$-functions.  We state it separately here for convenience.

\begin{corollary}\label{prop: PD2-binom} 
For $A, B_1, B_2$, $C \in \FFqhat$ and $\lambda_1$, $\lam_2 \in \FF_q$, we have
\begin{equation*}
\begin{split}
\FAPn{2}A{B_1&B_2}{C}{\lambda_1, \lam_2 }
&= \frac{-AC(-1)}{(q-1)^2} \sum_{\chi, \psi \in \FFqhat} \CC{B_1\chi}{\chi} \CC{B_2\psi}{\psi} \CC{A\chi \psi}{C\chi \psi} \chi(\lambda_1) \psi(\lam_2)\\
 &\qquad + {\delta(\lam_2)} \frac{AC(-1)}{q-1} \sum_{\chi \in \FFqhat} \CC{B_1\chi}{\chi} \CC{A\chi}{C\chi} \chi(\lambda_1)\\
 &\qquad + {\delta(\lambda_1)} \frac{AC(-1)}{q-1} \sum_{\chi \in \FFqhat} \CC{B_2\chi}{\chi} \CC{A\chi}{C\chi} \psi(\lam_2)\\
 &\qquad + \delta(\lambda_1) \delta(\lam_2) J(A, C\overline{A}).
\end{split}
\end{equation*}
\end{corollary}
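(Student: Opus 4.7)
The plan is to mimic, for $n = 2$, the derivation used to establish Proposition~\ref{PD-binom}(2), but this time to retain the $\delta$-terms arising from Theorem~\ref{FF-binom} in the expansion of each factor $\overline{B_i}(1-\lambda_i y)$.  This makes the resulting identity valid for all $\lambda_1,\lambda_2 \in \FF_q$, not just for $\lambda_i \in \FF_q^\times$.  The four summands in the statement will correspond to the four independent choices, for $i = 1, 2$, between the $\delta(\lambda_i)$-piece and the $\chi_i$-sum piece.

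Concretely, I would start from Definition~\ref{PPDn}, writing the period function as a sum over $y \in \FF_q$.  Since $A(0) = 0$, this sum is effectively over $y \in \FF_q^\times$, so that $\delta(\lambda_i y) = \delta(\lambda_i)$ throughout.  I would then apply \eqref{expand-char} to each factor $\overline{B_i}(1-\lambda_i y)$ and rewrite $J(\overline{B_i},\overline{\chi_i}) = \chi_i(-1) J(B_i\chi_i,\overline{\chi_i})$ via Lemma~\ref{jacobi-identities}(2), obtaining
\begin{align*}
\overline{B_i}(1-\lambda_i y) = \delta(\lambda_i) + \frac{1}{q-1} \sum_{\chi_i \in \FFqhat} J(B_i\chi_i,\overline{\chi_i}) \, \chi_i(-\lambda_i y).
\end{align*}
Multiplying out the two expansions and interchanging the order of summation, the inner sum over $y$ in each of the four resulting pieces collapses to a Jacobi sum of the shape $J(AD, C\overline{A})$, where $D \in \{1, \chi_1, \chi_2, \chi_1\chi_2\}$ records which character-sum pieces were selected.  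Applying Lemma~\ref{jacobi-identities}(1) to rewrite $J(AD, C\overline{A}) = AD(-1) J(AD, \overline{CD})$, and then converting every Jacobi sum via \eqref{char-binomial}, yields the binomial-coefficient form displayed in the statement.

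The main obstacle is bookkeeping the signs and the characters of $-1$.  In each piece, every application of \eqref{char-binomial} introduces a factor of $-\chi(-1)$, each substitution $\chi_i(-\lambda_i) \mapsto \chi_i(\lambda_i)$ introduces $\chi_i(-1)$, and Lemma~\ref{jacobi-identities}(1) introduces $AD(-1)$.  One checks, piece by piece, that the $\chi_i(-1)$ factors occur in even multiplicity and hence cancel, while the $A(-1)$ and $C(-1)$ factors combine to give the common factor $AC(-1)$, with an overall sign of $-1$ for the double-sum piece and $+1$ for the two single-sum pieces.  The fully $\delta$-ed piece simply contributes $\delta(\lambda_1)\delta(\lambda_2) \sum_{y} A(y) C\overline{A}(1-y) = \delta(\lambda_1)\delta(\lambda_2) J(A, C\overline{A})$, requiring no sign analysis.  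Assembling the four pieces gives the stated identity.
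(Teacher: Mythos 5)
Your proposal is correct and takes essentially the same route as the paper: the paper obtains Proposition \ref{PD-binom} by exactly this expansion of each $\overline{B_i}(1-\lambda_i y)$ via Theorem \ref{FF-binom}, and the corollary is just the $n=2$ case with the $\delta$-pieces retained so that $\lambda_1,\lambda_2=0$ are allowed. Your sign bookkeeping is right --- the conversions via Lemma \ref{jacobi-identities} and \eqref{char-binomial} do produce the overall factor $-AC(-1)$ on the double sum and $+AC(-1)$ on each single sum.
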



\section{Geometric Interpretation}\label{section:geometric}

\subsection{Generalized Picard curves}
The integral representation of the period function $P_D^{(n)}$ can be naturally viewed in terms of a period of the smooth model of the generalized Picard curve given in \eqref{C-lam} and restated here for convenience
\begin{equation*}
\ClamNijk \; : \; y^N = x^i (1-x)^j (1-\lam_1 x)^{k_1} \cdots (1-\lam_n x)^{k_n}.
\end{equation*}
  The curve $\ClamNijk$ has singularities when $x = 0, 1, \infty,$ and $\frac{1}{\lambda_i}$, for $1 \leq j \leq n$.  Let 
$\XlamNijk$ denote its desingularization, or smooth model.    

%
%
%

To compute the genus of $\XlamNijk$, we utilize the following theorem of Archinard \cite{archinard}.

\begin{theorem}[[Theorem 4.1 of \cite{archinard}]\label{thm-archinard}
Let $X_N$ be the desingularization of the irreducible projective algebraic plane curve $C_N$ defined over $\C$ by the affine equation
\begin{equation}\label{CN-curve}
y^N = \prod_{i = 0}^r (x - \lambda_i)^{A_i},
\end{equation}
with $\lambda_0, \ldots, \lambda_r \in \C$ such that $\lambda_i \neq \lambda_j$ for all $i \neq j \in \{0, \ldots, r\}$.  Let $N, A_0, \ldots, A_r \in \N$ satisfy
$$N \neq \sum_{k=0}^r A_K \;\; \text{ and } \;\; \gcd(N, A_0, \ldots, A_r) = 1.$$
Then the Euler characteristic of $X_N(\C)$ is given by
$$\chi(X_N(\C)) = -rN + \gcd(N, N-\sum_{k=0}^r A_k) + \sum_{j=0}^r \gcd(N, A_j),$$
and the genus of $X_N$ by
\begin{equation}\label{XN-genus}
g[X_N] = (X_N(\C)) = 1 + \frac{1}{2}(rN - \gcd(N, N- \sum_{k=0}^rA_k) - \sum_{j=0}^r \gcd(N, A_j)).
\end{equation}
\end{theorem}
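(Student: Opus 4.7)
The plan is to apply the Riemann--Hurwitz formula to the degree-$N$ branched covering $\pi \colon X_N \to \mathbb{P}^1$ induced by the projection $(x,y) \mapsto x$. First I would use the hypothesis $\gcd(N, A_0, \ldots, A_r) = 1$ to verify that the affine equation $y^N = \prod_{i=0}^r (x-\lambda_i)^{A_i}$ is irreducible over $\mathbb{C}(x)$: if some prime $\ell \mid N$ divided every $A_i$, the polynomial $y^N - \prod (x-\lambda_i)^{A_i}$ would factor through $y^{N/\ell}$; ruling this out ensures $X_N$ is connected and that $\pi$ is genuinely of degree $N$.

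Next I would compute the ramification of $\pi$ locally. Near a finite branch point $x = \lambda_i$, writing $\prod_{j \neq i}(x - \lambda_j)^{A_j} = u(x)$ for a local unit, the equation becomes $y^N = (x - \lambda_i)^{A_i} u(x)$. Setting $d_i := \gcd(N, A_i)$ and writing $A_i = d_i a_i$, $N = d_i n_i$ with $\gcd(a_i, n_i) = 1$, the fibre above $\lambda_i$ in the smooth model consists of exactly $d_i$ points, each with ramification index $n_i = N/d_i$; the total contribution to $\sum_P (e_P - 1)$ above $\lambda_i$ is therefore $N - d_i$. At infinity I would pass to the chart $w = 1/x$; clearing denominators shows that $\pi$ behaves locally like $\tilde{y}^N = w^{N - A} \cdot (\text{unit})$ where $A := \sum_{k=0}^r A_k$, so the fibre above $\infty$ has $\gcd(N, N-A)$ points, each ramified to order $N/\gcd(N, N-A)$, contributing $N - \gcd(N, N-A)$. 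The hypothesis $N \nmid A$ ensures that infinity is actually a branch locus (or at least that no degeneracy occurs in the count).

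Summing the contributions via Riemann--Hurwitz gives
\begin{align*}
2g[X_N] - 2 &= -2N + \sum_{i=0}^{r} \bigl(N - \gcd(N, A_i)\bigr) + \bigl(N - \gcd(N, N-A)\bigr) \\
&= rN - \sum_{j=0}^{r} \gcd(N, A_j) - \gcd(N, N-A),
\end{align*}
which rearranges to the genus formula \eqref{XN-genus}. The Euler characteristic formula then follows from $\chi(X_N(\mathbb{C})) = 2 - 2g[X_N]$.

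The main obstacle, though more technical than deep, is the local analysis at infinity: one must change charts correctly, track how the exponent $A$ modulo $N$ controls the ramification, and confirm that $\gcd(N, N - A) = \gcd(N, A)$ gives the right point count even in degenerate cases such as $N \mid A_i$ for some $i$. Once the local ramification data are established, the genus and Euler characteristic computations are purely algebraic.
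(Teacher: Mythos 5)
Your Riemann--Hurwitz argument is correct and complete in outline: the paper itself gives no proof of this statement (it is quoted verbatim as Theorem 4.1 of Archinard's paper \cite{archinard}), and Archinard's own derivation likewise reduces to computing the Euler characteristic of the degree-$N$ cyclic cover of $\mathbb{P}^1$ from the local monodromy at the $r+2$ branch points, which is equivalent to the ramification bookkeeping you carry out. Your local data are right: above $\lambda_i$ the normalization has $\gcd(N,A_i)$ points each with ramification index $N/\gcd(N,A_i)$, above $\infty$ it has $\gcd(N,N-A)=\gcd(N,A)$ points with index $N/\gcd(N,A)$, and the degenerate cases $N \mid A_i$ contribute zero exactly as your formula predicts, so no case analysis is actually needed. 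The only point worth tightening is the irreducibility step: the correct criterion is that $y^N - f(x)$ is irreducible over $\C(x)$ if and only if $f$ is not a $d$-th power for any $d>1$ dividing $N$ (the auxiliary $-4g^4$ case being vacuous over $\C$), which for $f=\prod(x-\lambda_i)^{A_i}$ with distinct roots is precisely the condition $\gcd(N,A_0,\ldots,A_r)=1$; your sketch of the contrapositive for a single prime $\ell$ is the right idea but should be phrased as this equivalence.
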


Our curves $\XlamNijk$ are isomorphic to the curves $X_N$ given in Theorem \ref{thm-archinard}, and so by applying this theorem, we find that the genus of $\XlamNijk$ is given by
%
%
\begin{equation}\label{genus-Xlam}
g(\XlamNijk) = 1 + \frac{1}{2} \bigg((n+1)N -\gcd(N, i+j+\sum_{j=1}^n k_j) - \gcd(N,i) - \gcd(N,j) - \sum_{j=1}^n\gcd(N,k_j) \bigg).
\end{equation}
In particular, the classical Picard curve, $C_{\lam_1, \lam_2}: \,\, y^3 = x^2(1-x)(1-\lam_1 x)(1-\lam_2 x)$, has genus 3.
\bigbreak

\subsection{Counting points on $\ClamNijk$ over finite fields}
Finite-field hypergeometric functions have been used by many authors in recent years to count points on affine hypersurfaces and algebraic varieties, with a number of applications. (See \cite{ahlono1}, \cite{ahlono2}, \cite{FOP}, \cite{dictionary}, \cite{mccarthy2}, \cite{vega}, \cite{WIN3b}, among others.) 
 
Here, we use a technique similar to the one used by Fuselier, et. al. in \cite{dictionary} to count points on generalized Legendre curves, which in turn is based on the point-counting method used by Vega in \cite{vega}. We first require the following well-known result on character sums.

%
%
\begin{lemma}[Proposition 8.1.5 of \cite{ireland-rosen}]\label{chisum}
Let $p$ be a prime and $a \in \FF_p \setminus \{0\}$.  If $n \mid (p-1)$ then
\begin{equation*}
\# \{x \in \FF_p \mid x^n = a\} = \sum_{\chi^n = \varepsilon} \, \chi(a),
\end{equation*}
where the sum runs over all characters $\chi \in \widehat{\FF_p^\times}$ of order dividing $n$.
\end{lemma}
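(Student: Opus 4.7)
The plan is to reduce both sides of the identity to the same dichotomy: the count depends only on whether $a$ is an $n$-th power in $\FF_p^\times$. The key structural fact to invoke is that $\FF_p^\times$ is cyclic of order $p-1$, which, together with $n \mid p-1$, ensures that the subgroup $(\FF_p^\times)^n$ of $n$-th powers has index exactly $n$, and that the $n$-th roots of unity in $\FF_p^\times$ form a cyclic subgroup $\mu_n$ of order $n$.

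First I would evaluate the left-hand side directly: if $a \notin (\FF_p^\times)^n$ then the solution set is empty, while if $a = x_0^n$ for some fixed $x_0$ then the full solution set is the coset $x_0 \mu_n$, of size $n$. Hence the left-hand side equals $n$ or $0$ according to whether $a \in (\FF_p^\times)^n$. Next I would evaluate the right-hand side using character orthogonality. The characters $\chi \in \widehat{\FF_p^\times}$ satisfying $\chi^n = \varepsilon$ are precisely those trivial on $(\FF_p^\times)^n$, so they factor through the quotient $G := \FF_p^\times/(\FF_p^\times)^n$, a cyclic group of order $n$, and the sum is taken over the full dual $\widehat G$. The dual orthogonality relation $\sum_{\chi \in \widehat G} \chi(g) = |G| \cdot \delta_{g, 1_G}$, applied to the image of $a$ in $G$, yields $n$ when $a \in (\FF_p^\times)^n$ and $0$ otherwise, matching the left-hand side.

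I do not anticipate any genuine obstacle; this is a standard application of the character theory of finite cyclic groups. The one place where a little care is needed is in verifying that the characters with $\chi^n = \varepsilon$ are exactly in bijection with $\widehat G$. This is automatic since $\widehat{\FF_p^\times}$ is itself cyclic of order $p-1$, so its $n$-torsion subgroup has order $\gcd(n, p-1) = n = |\widehat G|$, and the inclusion of this $n$-torsion subgroup into the dual of $G$ (via inflation along $\FF_p^\times \twoheadrightarrow G$) is a map of groups of the same size and hence a bijection.
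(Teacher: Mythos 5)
Your argument is correct and complete: the paper gives no proof of this lemma, merely citing Proposition 8.1.5 of Ireland--Rosen, and your reduction of both sides to the dichotomy ``$a$ is or is not an $n$-th power'' (via cyclicity of $\FF_p^\times$ on the left and orthogonality for the dual of $\FF_p^\times/(\FF_p^\times)^n$ on the right) is essentially the standard textbook proof. The only cosmetic difference from Ireland--Rosen is that they dispose of the non-residue case by the usual trick of multiplying the character sum by $\rho(a)$ for a suitable $\chi^n=\varepsilon$ character with $\rho(a)\neq 1$, rather than invoking full dual-group orthogonality; the two are interchangeable.
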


%
%
Let $\#\ClamNijk(\FF_p)$ denote the number of $\FF_p$-points on the generalized Picard curve $\ClamNijk$.  We have the following formula in terms of Appell-Lauricella period functions.
\begin{theorem}\label{Picard-pts}
Let $p \equiv 1 \pmod{N}$ be a prime, and let $\eta_N \in \widehat{\FF_p^\times}$ be a primitive $N^{th}$-order character.  Then
\begin{align*}
\#\ClamNijk(\FF_p) = 1 + p \,+\,
\sum_{m=1}^{N-1}
\FAPn{n}{\eta_N^{mi}}{\eta_N^{-mk_i} & \cdots & \eta_N^{-mk_n}}{&  \eta_N^{mi+mj}}{\lambda_1, \ldots, \lambda_n }.
\end{align*}
\end{theorem}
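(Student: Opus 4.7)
The plan is to count $\FF_p$-points of $\ClamNijk$ by counting, for each $x \in \FF_p$, the number of $y \in \FF_p$ with $y^N = f(x)$ where $f(x) := x^i (1-x)^j (1-\lam_1 x)^{k_1} \cdots (1-\lam_n x)^{k_n}$, and then adding a single point at infinity on the affine model. Lemma \ref{chisum} gives, for $a \in \FF_p^\times$, the identity $\#\{y : y^N = a\} = \sum_{m=0}^{N-1} \eta_N^m(a)$, while for $a=0$ the correct count is $1$ (whereas the sum evaluates to $0$ because we extend each $\eta_N^m$ by $0$). So I would write
\[
\#\{y \in \FF_p : y^N = f(x)\} \;=\; \delta(f(x)) \;+\; \sum_{m=0}^{N-1} \eta_N^m(f(x)),
\]
and then sum over $x \in \FF_p$, separating the $m=0$ term from the $m \ge 1$ terms.

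For the $m = 0$ piece, $\eps(f(x))$ equals $1$ on the complement of the zero set of $f$ and $0$ elsewhere, so $\sum_x \eps(f(x)) = p - \#\{x : f(x)=0\}$. Meanwhile $\sum_x \delta(f(x)) = \#\{x : f(x)=0\}$, so the two contributions combine cleanly to give exactly $p$, independent of the number of roots of $f$ in $\FF_p$. This is the source of the $p$ in the formula, and adding the unique point at infinity contributes the $1$.

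For each $m \in \{1, \ldots, N-1\}$, the multiplicativity of characters gives
\[
\sum_{x \in \FF_p} \eta_N^m(f(x)) \;=\; \sum_{x \in \FF_p} \eta_N^{mi}(x)\, \eta_N^{mj}(1-x)\, \eta_N^{mk_1}(1-\lam_1 x) \cdots \eta_N^{mk_n}(1-\lam_n x).
\]
Matching this against Definition \ref{PPDn}, I would set $A = \eta_N^{mi}$, $C\overline{A} = \eta_N^{mj}$ (hence $C = \eta_N^{m(i+j)}$), and $\overline{B_\ell} = \eta_N^{mk_\ell}$ (hence $B_\ell = \eta_N^{-mk_\ell}$) to identify the character sum as exactly $\FAPn{n}{\eta_N^{mi}}{\eta_N^{-mk_1} & \cdots & \eta_N^{-mk_n}}{& \eta_N^{m(i+j)}}{\lambda_1, \ldots, \lambda_n}$. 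Summing over $m = 1, \ldots, N-1$ then yields the claimed formula.

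The only genuinely delicate point is the bookkeeping around $f(x) = 0$ and the point at infinity: one must make sure the $\delta(f(x))$ correction is applied to each $x$ in the zero locus (including $x = 0, 1, 1/\lam_1, \ldots, 1/\lam_n$), and that the affine model contributes a single point at infinity regardless of the behavior of $f$ there. Because $p$ emerges from the $m=0$ sum without reference to how many zeros $f$ has in $\FF_p$, this cancellation is what makes the final formula so clean; explaining it carefully is the main obstacle to the proof.
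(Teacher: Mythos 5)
Your proposal is correct and follows essentially the same route as the paper: Lemma \ref{chisum} for the fiber counts, the observation that the $m=0$ character sum and the zero locus of $f$ combine to give exactly $p$, and the identification $A=\eta_N^{mi}$, $C=\eta_N^{m(i+j)}$, $B_\ell=\eta_N^{-mk_\ell}$ matching each $m\ge 1$ sum with a $\PPDn$ value. The only cosmetic difference is that you fold the two cases into a single identity via $\delta(f(x))$ rather than splitting the sum over $x$ into $f(x)=0$ and $f(x)\neq 0$ as the paper does.
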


\begin{proof}
Let $f(x,\mathbf{\lam}) = x^i(1-x)^j(1-\lambda_1 x)^{k_1}\cdots (1-\lambda_n x)^{k_n}$.   With $1$ representing the point at infinity, we have

\begin{equation*}
\begin{split}
\#\ClamNijk&(\FF_p) = 1 + \sum_{x \in \FF_p} \#\{ y \in \FF_p \mid y^N = f(x,\mathbf{\lam})\}, \\
&= 1 + \#\{ x \in \FF_p \mid f(x) = 0\} + \sum_{\substack{x \in \FF_p\\ f(x) \neq 0}}  \#\{ y \in \FF_p \mid y^N = f(x,\mathbf{\lam})\},\\
&= 1 + \#\{ x \in \FF_p \mid f(x) = 0\} + \sum_{x \in \FF_p}  \sum_{m=0}^{N-1} \, \eta_N^m(f(x,\mathbf{\lam})),\,\,\text{ by Lemma \ref{chisum}}, \\
&= 1 + \bigg[\#\{ x \in \FF_p \mid f(x) = 0\} + \sum_{x \in \FF_p}\, \varepsilon(f(x))\bigg] + \sum_{x \in \FF_p}  \sum_{m=1}^{N-1} \, \eta_N^m(f(x,\mathbf{\lam})).
\end{split}
\end{equation*}
The terms in the bracket combine to give $p$ as follows:   Since $\varepsilon(0)=0$, then for each $x \in \FF_p$, either $f(x)=0$ and this contributes $1$ to the count given by the first term in the bracket,  or $f(x) \neq 0$ and this contributes $1$ to the count given by the second term.  Therefore,
\begin{equation*}
\begin{split}
\#\ClamNijk&(\FF_p)
= 1 + p +   \sum_{m=1}^{N-1} \sum_{x \in \FF_p}  \, \eta_N^m(f(x,\mathbf{\lam})), \\
&= 1 + p +   \sum_{m=1}^{N-1} \sum_{x \in \FF_p}  \, \eta_N^m(x^i(1-x)^j(1-\lambda_1 x)^{k_1}\cdots (1-\lambda_n x)^{k_n}), \\
&= 1 + p +   \sum_{m=1}^{N-1} \sum_{x \in \FF_p}  \, \eta_N^{mi}(x)\eta_N^{mj}(1-x)\eta_N^{mk_1}(1-\lambda_1 x)\cdots \eta_N^{mk_n}(1-\lambda_n x),\\
&= 1 + p \,+\,
\sum_{m=1}^{N-1}
\FAPn{n}{\eta_N^{mi}}{\eta_N^{-mk_i} & \cdots & \eta_N^{-mk_n}}{&  \eta_N^{mi+mj}}{\lambda_1, \ldots, \lambda_n }\end{split}
\end{equation*}
\end{proof}

Counting $\FF_p$-points on the smooth model $\XlamNijk$ requires resolving the singularities at $0, 1, \infty,$ and $\frac{1}{\lambda_1}, \ldots, \frac{1}{\lambda_n}$, and determining the contribution arising from each one.  The contributions arising from these singularities affect the polynomial part $1+p$ in the count above, but more importantly, they do not change the hypergeometric functions that appear.  For brevity's sake, we omit the details here.  


\section{Some Degree $1$ Reduction and Transformation Formulas}\label{section:trans}

In this section, we consider the behavior of the $\PPDn$ and $\FFDn$ functions under transformations in the variables $\lambda_i$, as well as the simplifications that result when some of the characters are trivial or are repeated.  The following results are finite-field analogues of the classical results found in Slater \cite{slater}.  This development mirrors the translation of identities for classical hypergeometric functions to the finite field setting (see \cite{greene} and Section 8 of \cite{dictionary}).

\subsection{Transformation Formulas}
%
%
In order to establish these expressions, we first recall some necessary facts about the one variable period functions $\PPn$ defined in \cite{dictionary}.  The following proposition from \cite{dictionary} is due to Greene (Theorem 4.4 of \cite{greene}) using slightly different notation. 

\begin{proposition}\cite{dictionary}\label{P2-trans}
For any characters $A$, $B$, $C \in \widehat{\FF_q^\times}$, and $\lam \in \FF_q$, we have
\begin{eqnarray}
\pPPq21 {A&B}{&C}\lam
    &=&ABC(-1) \overline C(\lam)\, \pPPq21 {\, \overline CB&\overline CA}{&\overline C}\lam+\delta(\lam)J(B,C\overline B), \label{propeqn:first} \\
\pPPq21 {A&B}{&C}\lam
    &=&ABC(-1)\overline A(\lam )\, \pPPq21 {A&\overline CA}{&\overline BA}{\frac{1}{\lam}}+\delta(\lam)J(B,C\overline B), \label{propeqn:second} \\
\pPPq21 {A&B}{&C}\lam
   & =& B(-1)\, \pPPq21 {A&B}{&AB\overline C}{1-\lam}.  \label{propeqn:third}
\end{eqnarray}
\end{proposition}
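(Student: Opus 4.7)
The plan is to prove all three identities by expanding both sides via the Jacobi sum representation of $\pPPq21$ from Proposition \ref{PD-binom}(1), namely
\begin{equation*}
\pPPq21{A&B}{&C}{\lam} = \frac{B(-1)}{q-1} \sum_{\chi \in \FFqhat} J(A\chi, \overline{\chi})\, J(B\chi, \overline{C\chi})\, \chi(\lam) + \delta(\lam)\, J(B, C\overline{B}),
\end{equation*}
and then reconciling the resulting character sums through a re-indexing of $\chi$ together with the Jacobi sum identities in Lemma \ref{jacobi-identities}.

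For \eqref{propeqn:first}, the re-indexing $\chi \mapsto C\chi$ does the job cleanly: it converts $J(\overline{C}B\chi, \overline{\chi})$ into $J(B\chi, \overline{C\chi})$ and $J(\overline{C}A\chi, C\overline{\chi})$ into $J(A\chi, \overline{\chi})$, while producing an extra factor $C(\lam)$ that exactly cancels the prefactor $\overline{C}(\lam)$; the overall scalar factors collapse to $B(-1)$ using $A(-1)^2 = 1$. For \eqref{propeqn:second}, the reciprocal argument $1/\lam$ motivates the re-indexing $\chi \mapsto \overline{\chi}$, after which applications of Lemma \ref{jacobi-identities}(1) and (2) to the resulting Jacobi sums $J(A\overline{\chi}, \chi)$ and $J(\overline{C}A\overline{\chi}, B\overline{A}\chi)$ bring them to forms matching the LHS kernel, up to character signs absorbed by the $\overline{A}(\lam)$ prefactor.

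The delicate case is \eqref{propeqn:third}. Applying the Jacobi sum representation to the RHS and using Lemma \ref{jacobi-identities}(1) to simplify $J(B\chi, \overline{AB\overline{C}\chi}) = B\chi(-1) J(B\chi, A\overline{C})$ reduces the RHS to $\frac{B(-1)}{q-1}\sum_\chi J(A\chi, \overline\chi) J(B\chi, A\overline{C}) \chi(\lam - 1)$ plus a boundary term, whereas the LHS expansion produces $\frac{B(-1)}{q-1}\sum_\chi J(A\chi, \overline\chi) J(B\chi, \overline{C\chi}) \chi(\lam)$ plus a boundary term. The main obstacle is that no single change of summation variable converts one sum into the other: the two Jacobi products have different second arguments ($\overline{C\chi}$ versus $A\overline{C}$), and the character factors $\chi(\lam)$ and $\chi(\lam-1)$ differ nontrivially. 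I expect to bridge the gap using Lemma \ref{jacobi-identities}(4), applied with parameters chosen so that the two $\delta$-corrections on its right-hand side precisely account for the boundary contributions $\delta(\lam)$ and $\delta(1-\lam)$ at the two endpoints. Alternatively, one could proceed directly from the defining sum, applying the variant binomial theorem \eqref{alt-binom} to recast $B(y)\,C\overline{B}(1-y)$ as a character sum and then making the change of variable $y \mapsto 1-y$, which renders the $\lam \mapsto 1-\lam$ transformation transparent.
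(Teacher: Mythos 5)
Note first that the paper does not prove Proposition \ref{P2-trans} at all --- it is quoted from \cite{dictionary} (and goes back to Greene) --- but the $n$-variable generalizations proved in Section \ref{section:trans} show the intended techniques, so your attempt can be measured against those. Your argument for \eqref{propeqn:first} is correct as described. Your strategy for \eqref{propeqn:second} (expand via Proposition \ref{PD-binom} and re-index) is also the right one and is exactly how the paper handles the $n$-variable analogue, Proposition \ref{PDx->1/x}; the one imprecision is that the prefactor $\overline{A}(\lam)$ is not absorbed by ``character signs'' but by a further shift of the summation character --- the single substitution $\chi\mapsto\overline{A}\,\overline{\chi}$ together with the symmetry $J(X,Y)=J(Y,X)$ finishes it in one step, since it sends $J(A\chi,\overline{\chi})\mapsto J(\overline{\chi},A\chi)$, $J(\overline{C}A\chi,B\overline{A}\,\overline{\chi})\mapsto J(\overline{C\chi},B\chi)$, and $\overline{\chi}(\lam)\mapsto A(\lam)\chi(\lam)$.

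The genuine gap is in \eqref{propeqn:third}. Your primary route cannot work: Lemma \ref{jacobi-identities}(4) is an identity between products of Jacobi sums at \emph{fixed} characters, so while it can reshuffle $J(B\chi,\overline{C\chi})$ against $J(B\chi,A\overline{C})$, it does nothing to reconcile $\sum_\chi(\cdots)\chi(\lam)$ with $\sum_\chi(\cdots)\chi(\lam-1)$ --- these are Fourier expansions in $\chi$ evaluated at different points, and no identity applied termwise in $\chi$ converts one into the other; moreover the two $\delta$-corrections in Lemma \ref{jacobi-identities}(4) are $\delta$'s of \emph{characters}, not of the field elements $\lam$ or $1-\lam$, so they cannot supply the claimed boundary terms. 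Your fallback is closer to the truth but uses the wrong substitution: $y\mapsto 1-y$ in the defining sum is precisely what proves the Pfaff-type transformation $\lam\mapsto\lam/(\lam-1)$ (Proposition \ref{PDx->x/(x-1)}), not $\lam\mapsto 1-\lam$, and the preliminary appeal to \eqref{alt-binom} does not repair this. The correct move is the M\"obius involution $y\mapsto y/(y-1)$ on $\FF_q\setminus\{1\}$: writing the left side as $\sum_y B(y)\,C\overline{B}(1-y)\,\overline{A}(1-\lam y)$, the substitution sends $1-y\mapsto -1/(y-1)$ and $1-\lam y\mapsto \bigl(1-(1-\lam)y\bigr)/(1-y)$, and collecting characters (the factor $B(-1)$ arising from $\overline{B}(y-1)=B(-1)\overline{B}(1-y)$) yields $B(-1)\sum_y B(y)\,A\overline{C}(1-y)\,\overline{A}\bigl(1-(1-\lam)y\bigr)$, which is the right-hand side of \eqref{propeqn:third} with no boundary correction since the $y=0$ and $y=1$ terms vanish on both sides. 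This is exactly the paper's proof of the $n$-variable version, Proposition \ref{PDx->1-x}.
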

%
%
The behavior of $\PPDn$ under the transformation $\dsp \lam_i \mapsto 1 - \lam_i$ for each $i$ is given in the following result, analogous to equation \eqref{propeqn:third} in Proposition \ref{P2-trans}.
	
\begin{proposition}\label{PDx->1-x} For $A, B_1, B_2, \cdots, B_n, C \in \widehat{\FF_q^\times}$ and $\lambda_1, \cdots, \lambda_n \in \FF_q$, we have
\begin{multline*}
\FAPn{n}A{B_1&\cdots &B_n}{&C}{\lambda_1, \ldots, \lambda_n }\\
= A(-1)\FAPn{n}A{B_1&\cdots&B_n}{& A\overline{C}B_1B_2 \cdots B_n}{1-\lambda_1, \ldots, 1-\lambda_n }.
\end{multline*}
\end{proposition}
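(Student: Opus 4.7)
The plan is to reduce the right-hand side to the left-hand side by applying a Möbius substitution $u = y/(y-1)$ to the summation variable, which is the finite-field analogue of the classical integral substitution $t \mapsto t/(t-1)$ that exchanges the branch points $0$ and $1$.

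Expanding the right-hand side via Definition \ref{PPDn} and using $(A\overline{C}B_1\cdots B_n)\overline{A} = \overline{C}B_1\cdots B_n$, one obtains
\begin{equation*}
A(-1) \sum_{y \in \FF_q} A(y)\,\overline{C}B_1\cdots B_n(1-y) \prod_{i=1}^n \overline{B_i}\bigl(1 - (1-\lambda_i) y\bigr).
\end{equation*}
The key algebraic step is the factorization $1-(1-\lambda_i)y = (1-y)\bigl(1 + \lambda_i y/(1-y)\bigr)$, valid for $y \neq 1$. Applying this inside each $\overline{B_i}$ and using multiplicativity of characters produces a factor $\overline{B_1\cdots B_n}(1-y)$ which cancels with the $B_1\cdots B_n(1-y)$ already present; the $y=1$ term contributes $0$ since $\chi(0)=0$ for every character. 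The sum reduces to
\begin{equation*}
A(-1) \sum_{y \neq 1} A(y)\,\overline{C}(1-y) \prod_{i=1}^n \overline{B_i}\!\Bigl(1 + \frac{\lambda_i y}{1-y}\Bigr).
\end{equation*}

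Next I would make the substitution $u = y/(y-1)$, an involution of $\FF_q \setminus \{1\}$ with inverse $y = u/(u-1)$. Direct computation gives $1-y = 1/(1-u)$ and $\lambda_i y/(1-y) = -\lambda_i u$, from which
\begin{equation*}
A(y) = A(-1) A(u) \overline{A}(1-u), \qquad \overline{C}(1-y) = C(1-u), \qquad \overline{B_i}\!\Bigl(1 + \frac{\lambda_i y}{1-y}\Bigr) = \overline{B_i}(1-\lambda_i u).
\end{equation*}
Combining these and using $A(-1)^2 = 1$ to absorb the two factors of $A(-1)$, the sum becomes $\sum_{u \neq 1} A(u)\,C\overline{A}(1-u)\prod_{i=1}^n \overline{B_i}(1-\lambda_i u)$. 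The omitted $u=1$ term is zero, so reincluding it yields exactly the left-hand side by Definition \ref{PPDn}.

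The main technical point requiring care is the bookkeeping at the exceptional values $y=1$ and $u=1$ where the Möbius map has a pole; this is resolved cleanly by the blanket convention $\chi(0)=0$ for every character (including $\varepsilon$) from Section \ref{section:prelim}. As a sanity check, the $n=1$ case of the identity recovers equation \eqref{propeqn:third} of Proposition \ref{P2-trans}, after accounting for the parameter reversal in Definition \ref{PPDn} relative to the ${}_2\mathbb{P}_1$ notation of \cite{dictionary}.
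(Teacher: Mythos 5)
Your proof is correct and takes essentially the same route as the paper's: both rest on the involutive M\"obius substitution $y \mapsto y/(y-1)$, the finite-field analogue of the classical $t \mapsto t/(t-1)$. The only differences are cosmetic --- you run the substitution from the right-hand side back to the left and track the exceptional point $y=1$ explicitly (using $\chi(0)=0$), whereas the paper substitutes directly into the left-hand side and passes over that bookkeeping silently.
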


\begin{proof}

In Definition \ref{PPDn}, make the change of variables $y \mapsto \frac{z}{z-1}$.  It follows that $1-y \mapsto \frac{1}{1-z}$ and
$1- \lambda_i y \mapsto \frac{1 - (1-\lambda_i) z}{1-z}$, and carrying out these substitutions gives
\begin{align*}
&\FAPn{n}A{B_1&\cdots&B_n}{&C}{\lambda_1, \ldots, \lambda_n }\\
&\qquad = \sum_{y \in \FF_q} A\left(\frac{z}{z-1}\right) C\overline{A}\left(\frac{1}{1-z}\right) \,  \overline{B_1}\left( \frac{1-(1-\lambda_1)z}{1-z}\right)\,  \cdots \overline{B_n}\left( \frac{1-(1-\lambda_n)z}{1-z}\right).\\
&\qquad = A(-1) \sum_{y \in \FF_q}\,A(z) \,\overline{A}B_1\cdots B_n \overline{C}A(1-z)\, \overline{B_1}(1-(1-\lambda_1)z) \cdots  \overline{B_n}(1-(1-\lambda_n)z)\\
&\qquad = A(-1)\FAPn{n}A{B_1&\cdots&B_n}{& A\overline{C}B_1B_2 \cdots B_n}{1-\lambda_1, \ldots, 1-\lambda_n }
\end{align*}
\end{proof}

The corresponding result for $\mathbb F_D^{(n)}$ follows as an immediate consequence.

\begin{corollary}\label{FDx->1-x} For $A, B_1, B_2, \cdots, B_n, C \in \widehat{\FF_q^\times}$ and $\lambda_1, \cdots, \lambda_n \in \FF_q$, we have
\begin{multline*}
\FAFn{n}A{B_1&\cdots &B_n}{&C}{\lambda_1, \ldots, \lambda_n }\\
= \frac{J(A,\ol C B_1B_2\ldots B_n)}{A(-1)J(A,C\ol A)}\FAFn{n}A{B_1&\cdots&B_n}{A\overline{C}B_1B_2 \cdots B_n}{1-\lambda_1, \ldots, 1-\lambda_n }
\end{multline*}
\end{corollary}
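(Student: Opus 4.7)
The plan is to derive Corollary \ref{FDx->1-x} as an immediate consequence of Proposition \ref{PDx->1-x} by applying the normalization from Definition \ref{FFDn} to both sides. First, I would divide both sides of the identity in Proposition \ref{PDx->1-x} by $J(A, C\overline{A})$. By Definition \ref{FFDn}, this converts the left-hand side directly into
$$\FAFn{n}A{B_1&\cdots &B_n}{&C}{\lambda_1, \ldots, \lambda_n},$$
and yields a right-hand side equal to $\frac{A(-1)}{J(A,C\overline{A})}$ times the period function with parameter $C$ replaced by $C' := A\overline{C}B_1 B_2 \cdots B_n$.

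Next, I would identify the correct normalization for the $\FFDn$ on the right-hand side. Since the new upper parameter is $C'$, the relevant Jacobi sum is $J(A, C'\overline{A}) = J(A, \overline{C}B_1 B_2 \cdots B_n)$. Multiplying and dividing the right-hand side by this Jacobi sum lets us rewrite the period function as $J(A,\overline{C}B_1\cdots B_n)$ times $\FAFn{n}A{B_1&\cdots &B_n}{A\overline{C}B_1B_2 \cdots B_n}{1-\lambda_1, \ldots, 1-\lambda_n}$.

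Finally, I would tidy the scalar. Using the elementary observation that $A(-1)^2 = A(1) = 1$, so $A(-1) = A(-1)^{-1} = \pm 1$, the factor $\frac{A(-1)\,J(A,\overline{C}B_1\cdots B_n)}{J(A,C\overline{A})}$ coincides with $\frac{J(A,\overline{C}B_1\cdots B_n)}{A(-1)\,J(A,C\overline{A})}$, matching the stated constant exactly. No genuine obstacle arises here: the computation is purely bookkeeping once Proposition \ref{PDx->1-x} is in hand. The only point requiring care is the correct identification of the second parameter of the Jacobi sum used to normalize the right-hand side, which depends on the transformed upper character $C' = A\overline{C}B_1 B_2 \cdots B_n$ rather than the original $C$.
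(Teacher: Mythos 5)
Your proposal is correct and matches the paper's intent exactly: the paper gives no explicit proof, stating only that the corollary "follows as an immediate consequence" of Proposition \ref{PDx->1-x}, and your normalization argument—dividing by $J(A,C\overline{A})$, renormalizing the right-hand period function by $J(A,C'\overline{A})=J(A,\overline{C}B_1\cdots B_n)$, and using $A(-1)=A(-1)^{-1}$—is precisely that bookkeeping.
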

%
%
We next consider the behavior of $\PPDn$ under the transformation $ \lambda_i \mapsto \frac{1}{\lambda_i}$ for each $i$.  The following result is analogous to   equation \eqref{propeqn:second} \rmhs in Proposition \ref{P2-trans}.

\begin{proposition}\label{PDx->1/x}
The following identities hold,
\begin{enumerate}
\item 
For $A, B, C \in \widehat{\FF_q^\times}$ and $\lambda\in \FF_q$,
$$
\FAPn{1}AB{C}{\lambda}
= AC(-1)\ol{B}(-\lam)\FAPn{1}{\, \ol CB}{B}{\ol AB}{\frac 1{\lambda}}+\delta(\lam)J(A,C\ol A).
$$
\item
If $\lambda_i\in \FF_q^\times$, and $A$, $C$, $B_i \in \widehat{\FF_q^\times}$, then
  \begin{multline*}
\FAPn{n}A{B_1&\cdots&B_n}{&C}{\lambda_1, \ldots, \lambda_n }\\
= AC(-1)\(\prod_{i=1}^n\ol{B_i}(-\lam_i)\)\FAPn{n}{\, \ol C \, \prod_{i=1}^nB_i}{B_1&\cdots&B_n}{& \ol A \,  \prod_{i=1}^nB_i}{\frac 1{\lambda_1}, \ldots, \frac1{\lambda_n} }.
\end{multline*}
\end{enumerate}
\end{proposition}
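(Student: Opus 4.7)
My plan is to prove Proposition \ref{PDx->1/x} by a direct change of variables $y \mapsto 1/w$ in the defining sum of $\PPDn$; this is the finite-field analogue of the integral substitution $t \mapsto 1/t$ that produces $\lambda_i \mapsto 1/\lambda_i$ transformations for the classical Appell--Lauricella functions.

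For part (2), with each $\lambda_i \in \FF_q^\times$, I start from Definition \ref{PPDn} and substitute $y = 1/w$. This is a bijection of $\FF_q^\times$, and $y = 0$ contributes nothing since $A(0) = 0$ by our character convention. Using $A(1/w) = \ol A(w)$ together with the decompositions
\[
C\ol A\!\left(\frac{w-1}{w}\right) = C\ol A(-1)\cdot C\ol A(1-w)\cdot \ol C A(w),
\]
\[
\ol{B_i}\!\left(\frac{w-\lambda_i}{w}\right) = B_i(-1)\,\ol{B_i}(\lambda_i)\cdot \ol{B_i}(1-w/\lambda_i)\cdot B_i(w),
\]
I split each transformed character into its $w$-dependent part and a constant (in $-1$ and $\lambda_i$). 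The $w$-dependent parts multiply to $\ol C\prod_{i=1}^n B_i(w)\cdot C\ol A(1-w)\prod_{i=1}^n\ol{B_i}(1-w/\lambda_i)$, which is precisely the integrand of $\FAPn{n}{\ol C\prod_{i=1}^n B_i}{B_1&\cdots&B_n}{&\ol A\prod_{i=1}^n B_i}{\frac 1{\lambda_1}, \ldots, \frac 1{\lambda_n}}$. The constants, simplified via $\chi(-1)^2 = 1$ and $\ol{B_i}(-\lambda_i) = B_i(-1)\ol{B_i}(\lambda_i)$, collapse to $AC(-1)\prod_{i=1}^n\ol{B_i}(-\lambda_i)$, the claimed coefficient.

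Part (1) for $\lambda \in \FF_q^\times$ is the $n=1$ specialization of part (2). For $\lambda = 0$, the defining sum gives $\FAPn{1}{A}{B}{C}{0} = J(A, C\ol A)$ directly (since $\ol B(1) = 1$), while on the right-hand side the first term vanishes because $\ol B(0) = 0$, and the $\delta(\lambda)\,J(A, C\ol A)$ term supplies the matching value.

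The only nontrivial aspect is bookkeeping the $\chi(-1)$ factors arising from each rewrite $(w-1)/w = -(1-w)/w$ and $(w-\lambda_i)/w = -\lambda_i(1-w/\lambda_i)/w$. Because each character in the original integrand produces exactly one such sign, and $\chi(-1)^2 = 1$, these signs combine cleanly to yield the stated coefficient $AC(-1)\prod_i \ol{B_i}(-\lambda_i)$.
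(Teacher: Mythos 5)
Your proof is correct, but it takes a genuinely different route from the paper's. The paper disposes of Proposition \ref{PDx->1/x} in one line, deriving it from the Jacobi-sum representation of Proposition \ref{PD-binom}: one reindexes the multiple character sum by $\chi_i\mapsto\overline{\chi_i}$ and invokes the identities of Lemma \ref{jacobi-identities} to match coefficients on the two sides (this is also where the extra $\delta(\lambda)J(A,C\overline{A})$ term in part (1) naturally falls out of the $\chi=\varepsilon$ bookkeeping). You instead substitute $y\mapsto 1/w$ directly in the defining sum, which is exactly parallel to the paper's own proofs of Propositions \ref{PDx->1-x} and \ref{PDx->x/(x-1)} (which use $y\mapsto z/(z-1)$ and $y\mapsto 1-z$, respectively), just not the proof the authors chose here. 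Your computation checks out: the $w$-dependent factors assemble into the summand of $\mathbb{P}_D^{(n)}$ with upper character $\overline{C}\prod_i B_i$ and lower character $\overline{A}\prod_i B_i$ (note that for these new parameters $C'\overline{A'}=C\overline{A}$, so the middle factor is unchanged), the constants collapse to $AC(-1)\prod_i\overline{B_i}(-\lambda_i)$ since $C\overline{A}(-1)=AC(-1)$, and the $y=0$ term contributes nothing because $A(0)=0$ under the paper's convention. Your handling of $\lambda=0$ in part (1) --- the first term on the right vanishes since $\overline{B}(0)=0$, while $\delta(\lambda)$ supplies the value $J(A,C\overline{A})$ --- is the intended reading of that formula. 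On balance, your argument is more elementary and self-contained, and makes the provenance of each sign transparent; the paper's route is shorter on the page and keeps all of the transformation formulas uniformly anchored to the series/Jacobi-sum representation.
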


\begin{proof}
  This follows directly from Proposition \ref{PD-binom}. 
\end{proof}
%
%
Recall the two-variable analogue of the classical Pfaff-Kummer transformation, which is given in Proposition \ref{pfaff-analogue}.
The following transformation for $\PPDn$, when $ \lam_i \mapsto \frac{\lam_i}{\lam_i-1}$ for each $i$, gives a finite-field analogue of this result. 
	
\begin{proposition}\label{PDx->x/(x-1)}
The following identities hold.

\begin{enumerate}
\item
For $A, B, C \in \FFqhat$ and $\lambda\in \FF_q$,
\begin{align*}
\FAPn{1}AB{C}{\lambda}
=& \ol{B}(1-\lam)\FAPn{1}{\, \ol AC}{B}{C}{\frac {\lambda}{\lam-1}}+\delta(1-\lam)J(A,C\ol {AB})\\
=&\ol{A}(1-\lam)\FAPn{1}{A}{C\ol B}{C}{\frac {\lambda}{\lam-1}}+\delta(1-\lam)J(A,C\ol {AB}).
\end{align*}
\item
If $\lambda_i\in \FF_q$ with $\lam_i\neq 1$, and $A$, $C$, $B_i \in \FFqhat$, then
  \begin{multline*}
\FAPn{n}A{B_1&\cdots&B_n}{&C}{\lambda_1, \ldots, \lambda_n }\\
= \(\prod_{i=1}^n\ol{B_i}(1-\lam_i)\)\FAPn{n}{\, \ol AC }{B_1&\cdots&B_n}{&C}{\frac {\lam_1}{\lambda_1-1}, \ldots, \frac{\lam_n}{\lambda_n-1} }.
\end{multline*}
\end{enumerate}
\end{proposition}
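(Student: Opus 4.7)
My plan is to prove both parts of the proposition by changes of variables in the defining sum
$$
\FAPn{n}{A}{B_1 & \cdots & B_n}{& C}{\lambda_1, \ldots, \lambda_n}
= \sum_{y\in\FF_q} A(y)\, C\overline{A}(1-y)\prod_{i=1}^n \overline{B_i}(1-\lambda_i y),
$$
modeled on the integral proofs of the two classical forms of Pfaff's transformation for $\Ftwo$.

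For the first equality of Part (1) and for all of Part (2), I would use the substitution $y\mapsto 1-y$. This swaps $A(y)$ with $C\overline{A}(1-y)$, which matches the upper-left parameter $\overline{A}C$ (since $C\cdot\overline{\,\overline{A}C\,}=A$ recovers the factor on $1-y$), and converts each factor $\overline{B_i}(1-\lambda_i(1-y))=\overline{B_i}((1-\lambda_i)+\lambda_i y)$ into $\overline{B_i}(1-\lambda_i)\cdot\overline{B_i}(1-\tfrac{\lambda_i}{\lambda_i-1}y)$ via the factorization $(1-\lambda_i)+\lambda_i y = (1-\lambda_i)(1-\tfrac{\lambda_i}{\lambda_i-1}y)$ (for $\lambda_i\ne 1$) combined with multiplicativity of the character. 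Pulling the constants $\overline{B_i}(1-\lambda_i)$ out of the sum gives the right-hand side. In the edge case $\lambda=1$ of Part (1), the factor $\overline{B}(1-\lambda)=\overline{B}(0)=0$ makes the transformed term vanish, while the original sum collapses to $\sum_y A(y)\,C\overline{AB}(1-y) = J(A,C\overline{AB})$, matching the $\delta(1-\lambda)J(A,C\overline{AB})$ correction. In Part (2), the standing convention $\lambda_i\ne 0,1$ from Section \ref{section:ff} makes this correction unnecessary.

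For the second equality of Part (1), I would use the M\"obius substitution $y = z/(1-\lambda+\lambda z)$, the finite-field analogue of the classical integral substitution producing the alternate form of Pfaff's identity. Using $1-y = \tfrac{(1-\lambda)(1-z)}{1-\lambda+\lambda z}$ and $1-\lambda y = \tfrac{1-\lambda}{1-\lambda+\lambda z}$ together with multiplicativity of characters, the product $A(y)\,C\overline{A}(1-y)\,\overline{B}(1-\lambda y)$ becomes a product in which the powers of $(1-\lambda+\lambda z)$ collapse to the single character $\overline{C}B$. Factoring $(1-\lambda+\lambda z) = (1-\lambda)(1-\tfrac{\lambda}{\lambda-1}z)$ extracts an additional constant $\overline{C}B(1-\lambda)$, and combining constants yields $C\overline{AB}(1-\lambda)\cdot\overline{C}B(1-\lambda) = \overline{A}(1-\lambda)$ in front. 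Since $\overline{C}B = \overline{C\overline{B}}$, the remaining sum is precisely $\FAPn{1}{A}{C\overline{B}}{C}{\frac{\lambda}{\lambda-1}}$, as desired. The $\lambda=1$ case is handled by the $\delta(1-\lambda)$ term exactly as before.

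The main obstacle is the M\"obius substitution step: verifying that $z\mapsto z/(1-\lambda+\lambda z)$ is a bijection on $\FF_q$ away from the single pole $z=(\lambda-1)/\lambda$ (whose image $y=\infty$ is not in $\FF_q$), and checking that the missing terms contribute zero. The $y$-value $1/\lambda$ (the inverse image of $\infty$) satisfies $\overline{B}(1-\lambda y) = \overline{B}(0) = 0$, and the $z$-value $(\lambda-1)/\lambda$ yields $1-\tfrac{\lambda}{\lambda-1}z = 0$, so $\overline{C\overline{B}}(0)=0$. Once these verifications are settled, everything else reduces to careful character bookkeeping.
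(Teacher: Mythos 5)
Your argument is correct, and for Part (2) (and the first equality of Part (1)) it is essentially the paper's proof: the paper establishes Part (2) by exactly the substitution $y\mapsto 1-y$, using the same factorization $\ol{B_i}(1-\lam_i)\,\ol{B_i}\bigl(1-\tfrac{\lam_i}{\lam_i-1}y\bigr)=\ol{B_i}(1-\lam_i(1-y))$; the only cosmetic difference is that the paper runs the computation from the right-hand side back to the left. Where you go beyond the paper is the second equality of Part (1): the paper's proof environment covers only Part (2) and leaves Part (1) unproved, whereas you supply a direct M\"obius substitution $y=z/(1-\lam+\lam z)$ and verify that the powers of $1-\lam+\lam z$ collapse to $\ol{C}B$, that the constants combine to $\ol{A}(1-\lam)$, and that the two excluded points ($y=1/\lam$ and $z=(\lam-1)/\lam$) contribute $0$ because the relevant character is evaluated at $0$. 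That verification is sound (and is the kind of bookkeeping one must do in the finite-field setting, since there is no analytic continuation to hide behind); an alternative route to the same equality would be to combine the first equality with the symmetry and conjugation identities of Proposition \ref{prop: commute and conjugation}, but your direct computation is cleaner and self-contained. Your handling of the $\lam=1$ degenerate case, where the original sum collapses to $J(A,C\ol{AB})$ and the transformed term is killed by $\ol{B}(0)=0$, correctly accounts for the $\delta(1-\lam)$ term that the paper's proof does not address.
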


\begin{proof}
  
When $\lambda_i \neq 1$, the right-hand side is simply
\begin{align*}
\(\prod_{i=1}^n\ol{B_i}(1-\lam_i)\)&\FAPn{n}{\, \ol AC }{B_1&\cdots&B_n}{&C}{\frac {\lam_1}{\lambda_1-1}, \ldots, \frac{\lam_n}{\lambda_n-1} }\\
&=\(\prod_{i=1}^n\ol{B_i}(1-\lam_i)\)\sum_{y \in \FF_q} C\ol A(y) {A}(1-y) \, \prod_{i=1}^n \overline{B_i}\(1-\frac{\lam_i}{\lam_i-1} y\)\\
&= \sum_{y \in \FF_q} C\ol A(y) {A}(1-y) \, \prod_{i=1}^n \overline{B_i}\({1-\lam_i(1-y)}\)\\
&= \sum_{z \in \FF_q} A(z) C\overline{A}(1-z)\prod_{i=1}^n \overline{B_i}\({1-\lam_i z}\), \text{ after setting $y = 1-z$,}\\
&=\FAPn{n}A{B_1&\cdots&B_n}{&C}{\lambda_1, \ldots, \lambda_n }. 
\end{align*}
\end{proof}

\subsection{Reduction Formulas}
 
For the remainder of this section, we focus on translating several reduction formulas for the classical Appell functions into the finite field setting.  For simplicity, we restrict to the case $n=2$. First, we recall some useful additional properties of the $\PPtwo$ functions.

%
%
The first property is an analogue to Euler's formula for the classical $_2F_1$-hypergeometric functions, which can be obtained from the classical Pfaff transformation.

\begin{proposition}\cite{dictionary}\label{prop: double Pfaff}
For $A, B, C \in \widehat{\FF_q^\times}$,    we have
\begin{align*}
 \pPPq21 {A&B}{&C}\lambda &= C\ol {AB}(1-\lam)\pPPq21 {C\ol A&C\ol B}{&C}{\lam}+\delta(1-\lam)J(B,C\ol{AB}).
\end{align*}
\end{proposition}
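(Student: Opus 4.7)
The plan is to derive this ``double Pfaff'' identity exactly as Euler's transformation is derived classically, by composing two applications of the finite-field Pfaff-type transformation. The two Pfaff identities I need are the $n=1$ case of Proposition \ref{PDx->x/(x-1)}. Converting them into $\PPtwo$-notation via the identity $\FAPn{1}{B}{A}{C}{\lambda} = \pPPq21{A & B}{& C}{\lambda}$ and relabeling, they read
\begin{align*}
\pPPq21{A & B}{& C}{\lambda} &= \overline{A}(1-\lambda)\, \pPPq21{A & C\overline{B}}{& C}{\tfrac{\lambda}{\lambda-1}} + \delta(1-\lambda)\, J(B, C\overline{AB}), \\
\pPPq21{A & B}{& C}{\lambda} &= \overline{B}(1-\lambda)\, \pPPq21{C\overline{A} & B}{& C}{\tfrac{\lambda}{\lambda-1}} + \delta(1-\lambda)\, J(B, C\overline{AB}).
\end{align*}

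Assume first that $\lambda \neq 1$, so that $\mu := \lambda/(\lambda-1)$ is well-defined. I first apply the top identity to the left-hand side of the proposition, which sends the argument to $\mu$ and peels off the factor $\overline{A}(1-\lambda)$. Next I apply the second identity to the resulting inner function $\pPPq21{A & C\overline{B}}{& C}{\mu}$, taking $A' = A$, $B' = C\overline{B}$ in its statement. This sends the argument to $\mu/(\mu-1) = \lambda$ and the top row to $(C\overline{A},\, C\overline{B})$, producing a new prefactor $\overline{C\overline{B}}(1-\mu) = B\overline{C}(1-\mu)$. The key simplification is $1-\mu = 1/(1-\lambda)$, giving $B\overline{C}(1-\mu) = C\overline{B}(1-\lambda)$. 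Multiplying the two accumulated prefactors yields $\overline{A}(1-\lambda)\cdot C\overline{B}(1-\lambda) = C\overline{AB}(1-\lambda)$, which is the exact coefficient appearing in the statement.

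The only subtlety is delta-function bookkeeping. The second application nominally produces a $\delta(1-\mu)$ correction, but $\mu = 1$ has no solution in $\lambda$, so this piece vanishes identically in the regime $\lambda \neq 1$. The $\delta(1-\lambda)$ term from the first application is retained as the final correction. Finally, the case $\lambda = 1$ must be checked separately since $\mu$ is then undefined. A one-line direct computation from Definition \ref{PPDn} gives $\pPPq21{A & B}{& C}{1} = \sum_y B(y)\, C\overline{AB}(1-y) = J(B, C\overline{AB})$, which agrees with the right-hand side of the proposition because the factor $C\overline{AB}(0) = 0$ kills the main term while $\delta(0) = 1$ retains the Jacobi-sum piece.

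The computation is essentially mechanical once the two Pfaff transformations are written down. The main obstacle is purely bookkeeping: tracking how each character parameter is conjugated, multiplied by $C$, or left alone through the composition, and confirming that the prefactors $\overline{A}(1-\lambda)$ and $C\overline{B}(1-\lambda)$ recombine into $C\overline{AB}(1-\lambda)$. No deeper identity is invoked.
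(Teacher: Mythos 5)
Your argument is correct. Note first that the paper does not prove this proposition at all---it is quoted from \cite{dictionary} without proof---so there is no in-paper argument to compare against; judged on its own, your derivation is sound and entirely self-contained within the paper's toolkit. Your two starting identities are exactly the $n=1$ case of Proposition \ref{PDx->x/(x-1)} after converting via $\mathbb{P}^{(1)}_D[A;B;C;\lam]={}_2\mathbb{P}_1[B,A;C;\lam]$ and swapping the labels $A\leftrightarrow B$, and that proposition is established earlier in the paper by a direct substitution, so there is no circularity. The composition works as you say: $\mu=\lam/(\lam-1)$ is an involution of $\FF_q\setminus\{1\}$, the equation $\mu=1$ has no solution so the second delta term vanishes identically, $1-\mu=1/(1-\lam)$ turns the second prefactor $B\ol{C}(1-\mu)$ into $C\ol{B}(1-\lam)$, and the product with $\ol{A}(1-\lam)$ gives $C\ol{AB}(1-\lam)$ as required. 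Your boundary check at $\lam=1$ is also right: ${}_2\mathbb{P}_1[A,B;C;1]=\sum_y B(y)\,C\ol{AB}(1-y)=J(B,C\ol{AB})$, while on the right-hand side the main term dies because every character (including $\eps$) is extended by $\chi(0)=0$, leaving exactly $\delta(0)\,J(B,C\ol{AB})$. This ``Pfaff composed with Pfaff gives Euler'' route is the standard classical mechanism and is a perfectly good way to obtain the finite-field statement; the only alternative worth mentioning is a one-step direct proof via the substitution $y\mapsto (1-y)/(1-\lam y)$ in Definition \ref{PPDn}, which avoids the intermediate function but requires the same delta bookkeeping at $\lam=1$.
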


The next proposition highlights the symmetries that appear when $A,B\neq \eps$ and $A,B\neq C$.

\begin{proposition}\cite{dictionary}\label{prop: commute and conjugation}
If $A, B, C \in \widehat{\FF_q^\times}$,  $A,B\neq \eps$ and $A,B\neq C$,  then  we have the following.
\begin{enumerate}
\item[(1)]  In general, we have
\begin{align*}
J(A,\overline AC)\cdot \pPPq21 {A&B}{&C}\lambda &= J(B,\overline B C)\cdot \pPPq21 {B&A}{&C}\lambda, \\
\pFFq{2}{1}{A&B}{&C}\lambda &=
\pFFq 21{B&A}{&C}\lambda;
\end{align*}
\item[(2)] For $\l\neq 0$, $1$, we have
\begin{align*}
\pPPq21 {A&B}{&C}\lambda &=
\overline C(\lambda)C\overline{AB}(\lambda-1)\frac{J(B,C\ol B )}{J(A, C\ol A)}\pPPq21 {\ol A&\ol B}{&\ol C}\lambda, \\
 \pFFq{2}{1}{A&B}{&C}\lambda &=
\overline C(\lambda)C\overline{AB}(\lambda-1)\frac{J(\ol B,\ol C B)}{J( A, C\ol A)}\pFFq{2}{1} {\overline A &\overline B}{&\overline C}\lambda.
\end{align*}
\end{enumerate}
\end{proposition}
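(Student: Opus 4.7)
The plan for part (1) is to expand both sides of the claimed identity
$J(A, \ol A C)\,\pPPq 21 {A&B}{&C}\lambda = J(B, \ol B C)\,\pPPq 21 {B&A}{&C}\lambda$
using the Jacobi sum representation from Proposition~\ref{PD-binom}(1), which gives
\[
\pPPq 21 {A&B}{&C}\lambda = \frac{B(-1)}{q-1}\sum_\chi J(A\chi,\ol\chi)\,J(B\chi,\ol{C\chi})\,\chi(\lambda) + \delta(\lambda)\,J(B,C\ol B),
\]
and analogously for $\pPPq 21 {B&A}{&C}\lambda$ with $A$ and $B$ swapped. After cancellation, the identity reduces termwise to
\[
J(A,C\ol A)\,B(-1)\,J(A\chi,\ol\chi)\,J(B\chi,\ol{C\chi}) = J(B,C\ol B)\,A(-1)\,J(B\chi,\ol\chi)\,J(A\chi,\ol{C\chi}).
\]
For summands where all relevant characters are nontrivial I will apply the Gauss sum decomposition $J(X,Y)=g(X)g(Y)/g(XY)$; most $g$-factors cancel, and the residual identity collapses to $g(C\ol A)g(A\ol C)/\bigl(g(C\ol B)g(B\ol C)\bigr) = A(-1)/B(-1)$, which follows from $g(\chi)g(\ol\chi)=\chi(-1)q$ (for $\chi\neq\eps$) together with $X(-1)^2=1$. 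The exceptional summation indices $\chi\in\{\eps,\ol A,\ol B,\ol C,\ldots\}$, where the Gauss sum formula degenerates, are handled by direct computation using the identities in Lemma~\ref{jacobi-identities}; the hypotheses $A,B\neq\eps$ and $A,B\neq C$ ensure no further obstruction. The $\delta(\lambda)$ contribution at $\lambda=0$ gives $J(A,C\ol A)J(B,C\ol B)$ on both sides, which is manifestly symmetric. The $\pFFq 21$ symmetry then follows by dividing by $J(A,C\ol A)J(B,C\ol B)$ and recalling that $\pFFq 21 {A&B}{&C}\lambda = \pPPq 21 {A&B}{&C}\lambda/J(B,C\ol B)$.

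Part (2) will be derived by chaining three known transformations with part (1). For $\lambda\neq 0,1$, I first apply \eqref{propeqn:first} of Proposition~\ref{P2-trans} to get
\[
\pPPq 21 {A&B}{&C}\lambda = ABC(-1)\,\ol C(\lambda)\,\pPPq 21 {\ol C B&\ol C A}{&\ol C}\lambda.
\]
Next, Proposition~\ref{prop: double Pfaff} applied with $(A',B',C')=(\ol C B,\ol C A,\ol C)$ yields, after computing $C'\ol{A'B'} = C\ol{AB}$, $C'\ol{A'}=\ol B$, and $C'\ol{B'}=\ol A$,
\[
\pPPq 21 {\ol C B&\ol C A}{&\ol C}\lambda = C\ol{AB}(1-\lambda)\,\pPPq 21 {\ol B&\ol A}{&\ol C}\lambda.
\]
Finally, applying part (1) with the triple $(\ol A,\ol B,\ol C)$ swaps $\ol B$ and $\ol A$ at the cost of the ratio $J(\ol A,A\ol C)/J(\ol B,B\ol C)$, producing the target $\pPPq 21 {\ol A&\ol B}{&\ol C}\lambda$.

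The resulting coefficients then simplify cleanly. The character prefactor collapses via $ABC(-1)\cdot C\ol{AB}(-1) = C^2(-1) = 1$, converting $ABC(-1)\,C\ol{AB}(1-\lambda)$ into $C\ol{AB}(\lambda-1)$ as required. The Jacobi sum ratio simplifies via the identity $J(X,C\ol X)\,J(\ol X,X\ol C) = q$ (equal to $1$ when $C=\eps$), itself a consequence of $g(\chi)g(\ol\chi)=\chi(-1)q$; applied to $X=A$ and $X=B$ this gives $J(\ol A,A\ol C)/J(\ol B,B\ol C) = J(B,C\ol B)/J(A,C\ol A)$, matching the statement. The $\pFFq 21$-version follows from the $\pPPq 21$-identity by dividing both sides by the appropriate normalizing Jacobi sums and rearranging. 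The main obstacle will be part (1): while the $A\leftrightarrow B$ symmetry of the classical $_2F_1$ is immediate from its series representation, translating it to the Jacobi sum level forces one through the Gauss sum manipulation and a small amount of case-checking at degenerate characters; once part (1) is in hand, part (2) is essentially a bookkeeping exercise.
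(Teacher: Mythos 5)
The paper does not actually prove this proposition --- it is imported verbatim from \cite{dictionary} --- so there is no internal argument to compare against; judged on its own terms, your proposal is correct. For part (1), reducing the claim to the termwise identity
$J(A,C\ol A)\,B(-1)\,J(A\chi,\ol\chi)\,J(B\chi,\ol{C\chi}) = J(B,C\ol B)\,A(-1)\,J(B\chi,\ol\chi)\,J(A\chi,\ol{C\chi})$
is a sufficient (though not necessary) strategy, and the factorization $J(X,Y)=g(X)g(Y)/g(XY)$ is legitimate for every $\chi$-dependent factor that appears, since each relevant product $XY$ lies in $\{A,\,B,\,A\ol C,\,B\ol C\}$, all nontrivial under the hypotheses $A,B\neq\eps$, $A,B\neq C$; so in fact there are no exceptional values of $\chi$ to check, and the factors $g(A\chi)$, $g(B\chi)$, $g(\ol\chi)$, $g(\ol{C\chi})$ cancel in pairs exactly as you describe. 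The one degeneracy you should address explicitly is not a value of $\chi$ but the case $C=\eps$, where the outer factors become $J(A,\ol A)=-A(-1)$ and $J(B,\ol B)=-B(-1)$ and the factorization through $g(C)$ breaks down; the identity still holds there by direct inspection because the two sides degenerate symmetrically (and the $\chi$-dependent factors coincide since $\ol{C\chi}=\ol\chi$). Your part (2) is a clean chaining of \eqref{propeqn:first}, Proposition \ref{prop: double Pfaff} with the parameters you computed, and part (1) applied to the conjugate triple (whose hypotheses are inherited from those on $A,B,C$); the two simplifications you invoke, namely $ABC(-1)\cdot C\ol{AB}(-1)=1$ and $J(X,C\ol X)\,J(\ol X,X\ol C)=q$ (equal to $1$ when $C=\eps$, the point being that the constant is the same for $X=A$ and $X=B$ and hence cancels from the ratio), both check out, and the descent from the ${}_{2}\mathbb{P}_{1}$-identity to the ${}_{2}\mathbb{F}_{1}$-identity uses the correct normalization $J(B,C\ol B)$ for the first upper parameter in the second slot.
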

%
%
 
Our first reduction formula follows from the previous propositions by considering the case when $\lam_1=\lam_2$.  We obtain a finite field analogue of the classical reduction formula from the first equality in Proposition \ref{schlosser33ab}.

\begin{proposition}\label{FF-schlosser33a} For $A, B_1, B_2, C \in \widehat{\FF_q^\times}$ and $\lambda \in \FF_q$, if $A$, $B_1B_2\neq \eps$, then
\begin{align*}
\FAPn{2}A{B_1&B_2}{C}{\lambda, \lam }
=& C\overline{AB_1B_2}(1-\lambda)  \,
\frac{J(A,C\ol A)}{J(C\ol{B_1B_2}, B_1B_2)}\pPPq21{C\ol A& C\ol{B_1B_2}}{&C}\lam\\
&+\delta(1-\lambda) J(A, \overline{AB_1B_2}C),
\end{align*}
\begin{align*}
\FAFn{2}A{B_1&B_2}{C}{\lambda, \lam }
= C\overline{AB_1B_2}(1-\lambda)  \, \pFFq21{C\ol A& C\ol{B_1B_2}}{&C}\lam
+\delta(1-\lambda) \frac{J(A, \overline{AB_1B_2}C)}{J(A,C\ol A)}.
\end{align*}
\end{proposition}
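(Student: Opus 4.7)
The plan is to first observe that when the two evaluation points coincide, the bivariate period function collapses to a one-variable one. Since $\overline{B_1}(1-\lam y)\,\overline{B_2}(1-\lam y) = \overline{B_1B_2}(1-\lam y)$, Definition \ref{PPDn} yields
\begin{align*}
\FAPn{2}A{B_1&B_2}{C}{\lambda, \lam}
&= \sum_{y\in \FF_q} A(y)\, C\overline{A}(1-y)\, \overline{B_1B_2}(1-\lam y)\\
&= \FAPn{1}{A}{B_1B_2}{C}{\lam}
= \pPPq21{B_1B_2 & A}{&C}{\lam},
\end{align*}
using the identification $\FAPn{1}{A}{B}{C}{\lambda} = \pPPq21{B & A}{&C}{\lam}$ recorded immediately after Definition \ref{PPDn}.

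I would then apply the finite-field Euler analogue (Proposition \ref{prop: double Pfaff}) with the roles of $(A,B)$ in its statement played here by $(B_1B_2, A)$, which yields
\begin{equation*}
\pPPq21{B_1B_2 & A}{&C}{\lam} = C\overline{AB_1B_2}(1-\lam)\, \pPPq21{C\overline{B_1B_2} & C\overline{A}}{&C}{\lam} + \delta(1-\lam)\, J(A, C\overline{AB_1B_2}).
\end{equation*}
The $\delta(1-\lam)$ contribution already matches the second term on the right-hand side of the proposition exactly.

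To bring the remaining ${}_2\mathbb{P}_1$ into the asserted shape, the final step is to swap its upper-row characters via Proposition \ref{prop: commute and conjugation}(1); applied with upper-row characters $C\overline{B_1B_2}$ and $C\overline{A}$, together with the symmetry $J(U,V) = J(V,U)$, this gives
\begin{equation*}
\pPPq21{C\overline{B_1B_2} & C\overline{A}}{&C}{\lam}
= \frac{J(A,C\overline{A})}{J(C\overline{B_1B_2}, B_1B_2)}\, \pPPq21{C\overline{A} & C\overline{B_1B_2}}{&C}{\lam}.
\end{equation*}
Combining the previous two displays yields the stated identity for $\PPDtwo$, and dividing through by $J(A, C\overline{A})$ produces the $\FFDtwo$ version directly from Definition \ref{FFDn}.

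The only delicate point is that Proposition \ref{prop: commute and conjugation}(1) requires the upper-row characters to be distinct from both $\eps$ and $C$. The hypotheses $A, B_1B_2 \neq \eps$ supply $C\overline{A} \neq C$ and $C\overline{B_1B_2} \neq C$; however, the degenerate subcases $C = A$ or $C = B_1B_2$, in which $C\overline{A}$ or $C\overline{B_1B_2}$ becomes trivial, will need a brief separate verification. In those cases the relevant Jacobi sums specialize to $-1$ and both sides of the claimed identity can be checked directly from the definitions and Proposition \ref{P2-trans}. I expect this boundary bookkeeping, rather than the main chain of identities, to be the only real obstacle.
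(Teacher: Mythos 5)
Your proof is correct and is essentially the paper's own argument: collapse the two-variable period function to the one-variable period function with upper parameters $B_1B_2$ and $A$, apply Proposition \ref{prop: double Pfaff}, and then interchange the upper parameters via Proposition \ref{prop: commute and conjugation}(1) together with the symmetry of the Jacobi sum. Your added caveat that the interchange step also needs $C \neq A$ and $C \neq B_1B_2$ (which the stated hypotheses $A, B_1B_2 \neq \eps$ alone do not supply) is a genuine boundary point that the paper's proof passes over silently.
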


\begin{proof}
By the definition, Proposition \ref{prop: double Pfaff} and Proposition \ref{prop: commute and conjugation}, we observe that
\begin{align*}
  \FAPn{2}A{B_1&B_2}{C}{\lambda, \lam }=&\pPPq21{ {B_1B_2}&A}{&C}\lam\\
  =&C\overline{AB_1B_2}(1-\lambda)\pPPq21{ C\ol{B_1B_2}&C\ol A}{&C}\lam+\delta(1-\lambda) J(A, \overline{AB_1B_2}C)\\
          =& C\overline{AB_1B_2}(1-\lambda)  \,
\frac{J(A,C\ol A)}{J(C\ol{B_1B_2}, B_1B_2)}\pPPq21{C\ol A& C\ol{B_1B_2}}{&C}\l\\
&+\delta(1-\lambda) J(A, \overline{AB_1B_2}C).
\end{align*}
The identity for $\FFDn$ then follows from the definition.
\end{proof}
%
%

In \cite{dictionary}, the  second and third authors, et. al., characterize the period functions $\PPn$ and their corresponding $\FFn$ hypergeometric functions  as {\em primitive} if $A_i \neq \varepsilon$ and $A_i \neq B_j$ for all $i,j$.  Similarly, we make the following definition.

\begin{definition}\label{primitive}
A period function 
${\mathbb P}^{(n)}_D\left[\begin{matrix}A;&B_1 \,\,\,\, \ldots \,\,\,\, B_n  \smallskip \\  {} & C \end{matrix} \; ; \; \lam_1, \ldots, \lam_n\right]$ 
or hypergeometric function
${\mathbb F}^{(n)}_D\left[\begin{matrix}A;&B_1 \,\,\,\, \ldots \,\,\,\, B_n  \smallskip \\  {} & C \end{matrix} \; ; \; \lam_1, \ldots, \lam_n\right]$ 
is said to be {\em primitive} if $A, B_i \neq \varepsilon$ and $A, B_i \neq C$ for all $i$.
\end{definition}
%
%

Following the strategies in the proof of Proposition 4 from \cite{dictionary}, we obtain the following reduction formulas for the $\PPDtwo$-period functions.
\begin{proposition}\label{PD-reductions}
For $A, B_1, B_2, C \in \widehat{\FF_q^\times}$, and $\lambda_1, \lambda_2 \in \FF_q$,
\begin{enumerate}\label{B1-trivial}
\item If $\lambda_1 \neq 0$, then
\begin{align*}
    \FAPn{2}A{\eps &B_2}{C}{\lambda_1, \lam_2 }=\pPPq21{ {B_2}&A}{&C}{\lam_2} - B_2\overline{C}(\lambda_1) \overline{A}C(\lambda_1-1) \overline{B_2}(\lambda_1 - \lambda_2),
\end{align*}
\bigbreak

\item If $\lambda_1 \neq 0$, then
\begin{align*}
    \FAPn{2}A{B_1 &B_2}{A}{\lambda_1, \lam_2 }=\ol A(\lam_1)\pPPq21{ {B_2}&A}{&AB_1}{\frac{\lam_2}{\lam_1}} - \overline{B_1}(1- \lambda_1) \overline{B_2}(1-\lambda_2) \overline{B_2}(\lambda_1 - \lambda_2),
\end{align*}
\bigbreak

\item If $\lam_2\neq 1$ then
\begin{align*}
    \FAPn{2}A{B &C\ol B}{C}{\lambda_1, \lam_2 } =& B\ol A(\lam_2-1)\ol B(\lam_2-\lam_1)\pPPq 21{B& B\ol C}{& AB\ol C}{\frac{1-\lam_1}{\lam_2-\lam_1}}\\
    &\qquad \qquad -A(-1)B\ol C(\lam_2)\ol B(\lam_1) +\delta(\lam_2-\lam_1)\ol A(\lam_1-1)J(A,\ol C),
\end{align*}
\bigbreak

\item If { $\lam_1\neq 0$, $1$, and $\lam_2\neq 1$,} then
\begin{align*}
    \FAPn{2}\eps{B_1 & B_2}{C}{\lambda_1, \lam_2 } = \,\,
{
C\ol B_1(1-\lam_1)\ol C(-\lam_1)\ol B_2(1-\lam_2)\pPPq 21{B_2& C}{& C\ol B_1}{\frac{\lam_2(1-\lam_1)}{\lam_1(1-\lam_2)}}-1.
}
\end{align*}
\end{enumerate}
\end{proposition}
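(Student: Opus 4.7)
The four reductions share a unifying pattern: a character specialization either trivializes a factor in Definition \ref{PPDn} or creates an algebraic relation that collapses two factors, after which one peels off an explicit $\delta$-correction and (where needed) applies a Möbius substitution to reduce the $\FAPn{2}$-sum to a single-variable $\pPPq 21$. This parallels the strategies used for the $\pPPq21$-reductions in Proposition 4 of \cite{dictionary}.

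For (1), substitute $B_1 = \eps$ and write $\eps(1-\lam_1 y) = 1 - \delta(1-\lam_1 y)$ to split the sum. The main part is exactly $\pPPq 21{B_2 & A}{& C}{\lam_2}$ by Definition \ref{PPDn} with $n=1$, and the summand at the exceptional point $y = 1/\lam_1$ gives the stated correction after applying $\chi(a/b) = \chi(a)\ol{\chi}(b)$ to each factor. Case (2) is analogous with $C = A$: writing $\eps(1-y) = 1 - \delta(1-y)$ isolates $y = 1$, and after subtracting the boundary term, the rescaling $z = \lam_1 y$ extracts $\ol A(\lam_1)$ and identifies the remaining sum as the claimed $\pPPq 21$ in the variable $\lam_2/\lam_1$. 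For (4), $\eps(y)$ isolates $y = 0$ and contributes $-1$; the further substitution $y' = 1-y$ yields
\[
\ol{B_1}(1-\lam_1)\,\ol{B_2}(1-\lam_2)\sum_{y'} C(y')\,\ol{B_1}(1-\mu_1 y')\,\ol{B_2}(1-\mu_2 y') \;-\; 1,
\]
with $\mu_i = -\lam_i/(1-\lam_i)$. Rewriting the inner sum as $\FAPn{2}{C}{B_1 & B_2}{C}{\mu_1, \mu_2}$ plus its $y'=1$ boundary term and then applying case (2) (with $A = C$), the two boundary corrections cancel exactly, and the argument of the resulting $\pPPq 21$ becomes $\mu_2/\mu_1 = \lam_2(1-\lam_1)/[\lam_1(1-\lam_2)]$, as claimed.

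Case (3) is the delicate one. Apply the Möbius substitution $y = (1-u)/(1-\lam_2 u)$, valid for $\lam_2 \neq 1$ and giving a bijection $\FF_q \setminus \{1/\lam_2\} \to \FF_q \setminus \{1/\lam_2\}$. Direct computation gives $1-y = u(1-\lam_2)/(1-\lam_2 u)$, $1-\lam_2 y = (1-\lam_2)/(1-\lam_2 u)$, and $1-\lam_1 y = (1-\lam_1)(1-wu)/(1-\lam_2 u)$ with $w = (\lam_2-\lam_1)/(1-\lam_1)$. The hypothesis $B_1 B_2 = B \cdot C\ol B = C$ forces the four $(1-\lam_2 u)$-factors in the transformed summand to multiply to $\eps(1-\lam_2 u)$, so the main term collapses to a scalar times $\FAPn{1}{C\ol A}{B}{C}{w}$. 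The unique exceptional point in the bijection (the image $u = 1/\lam_2$ of $y = \infty$) contributes the correction $-A(-1)B\ol C(\lam_2)\ol B(\lam_1)$ after simplification. Finally, apply Proposition \ref{PDx->1/x}(1) to convert the $\FAPn{1}$ at $w$ into one at $1/w = (1-\lam_1)/(\lam_2-\lam_1)$, matching the stated expression $\pPPq 21{B & B\ol C}{& AB\ol C}{\frac{1-\lam_1}{\lam_2 - \lam_1}}$; the accompanying $\delta(w) = \delta(\lam_2-\lam_1)$ term becomes the final correction via the Jacobi identity $J(C\ol A, A) = A(-1)J(A, \ol C)$ from Lemma \ref{jacobi-identities}(1). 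The main obstacle is the careful bookkeeping in (3), namely tracking the $u = 1/\lam_2$ exceptional point and reconciling the prefactors $B\ol A(\lam_2-1)\ol B(\lam_2-\lam_1)$ with what the substitution naturally produces; the remaining manipulations are routine character arithmetic.
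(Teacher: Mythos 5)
Your overall strategy is sound, and for parts (1) and (2) it coincides with the paper's: isolate the summand annihilated by the $\eps$-factor, add it back as an explicit correction, and (for (2)) rescale by $\lam_1$. For (3) and (4) you take mildly but genuinely different routes, both of which check out. In (3) the paper substitutes $y\mapsto(1-y)/(\lam_2-y)$, which sends the argument of $\ol C$ to $y$ and lands \emph{directly} on a one-variable period function at $\tfrac{1-\lam_1}{\lam_2-\lam_1}$, with the exceptional point $y=\lam_2$ supplying the same correction $-A(-1)B\ol C(\lam_2)\ol B(\lam_1)$; your substitution $y=(1-u)/(1-\lam_2u)$ instead produces a $\PP_D^{(1)}$ at $w=(\lam_2-\lam_1)/(1-\lam_1)$ and needs the extra inversion step via Proposition \ref{PDx->1/x}(1), whose $\delta(w)$-term you correctly convert into $\delta(\lam_2-\lam_1)\ol A(\lam_1-1)J(A,\ol C)$ using $J(C\ol A,A)=A(-1)J(A,\ol C)$. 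I verified your prefactor bookkeeping, namely $\ol AB(1-\lam_2)\,\ol B(1-\lam_1)\cdot A(-1)\,\ol B(-w)=B\ol A(\lam_2-1)\,\ol B(\lam_2-\lam_1)$, as well as the $u=1/\lam_2$ boundary contribution; both match the stated formula, so your route is valid though one step longer than the paper's. In (4) the paper redoes the substitution $z=\tfrac{\lam_1}{\lam_1-1}y$ by hand, whereas you reuse part (2) with $A=C$; that is a nice economy, but be aware it only works with the \emph{corrected} form of (2): the boundary term that the computation actually produces (in your argument and in the paper's own proof alike) is $-\ol{B_1}(1-\lam_1)\,\ol{B_2}(1-\lam_2)$, without the spurious factor $\ol{B_2}(\lam_1-\lam_2)$ printed in the statement, and the lower parameter of the resulting ${}_2\mathbb{P}_1$ should be $A\ol{B_1}$ rather than the printed $AB_1$ (consistent with the $C\ol{B_1}$ that correctly appears in (4)). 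Your claimed exact cancellation of the two boundary terms holds precisely for that corrected version, so you should record (2) in its corrected form before citing it in the proof of (4).
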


\begin{proof}
Certain parts of the proof involve a straightforward use of character sums, while others rely on the Jacobi sum interpretation.
\begin{enumerate}
%
%
\item When $\lambda_1 \neq 0$, we have
\begin{align*}
\begin{split}
 \FAPn{2}A{\eps &B_2}{C}{\lambda_1, \lam_2 }
&= \sum_{y \in \FF_q} A(y) \overline{A}C(1-y) \varepsilon (1-\lambda_1 y) \overline{B_2}(1-\lambda_2 y)\\
&= \sum_{\substack{y \in \FF_q\\ y \neq 1/\lambda_1}} A(y) \overline{A}C(1-y) \overline{B_2} (1 - \lambda_2 y), \text{ by definition of $\varepsilon$}.
\end{split}
\end{align*}
Simplifying using properties of characters and the definition of the period function gives
\begin{align*}
\begin{split}
& \FAPn{2}A{\eps &B_2}{C}{\lambda_1, \lam_2 }\\
& \qquad \qquad = \sum_{y \in \FF_q} B(y) \overline{A}C(1-y) \overline{B_2}(1-\lambda_2 y) - A\left(\frac{1}{\lambda_1}\right) \overline{A}C\left(1 - \frac{1}{\lambda_1}\right)\overline{B_2}\left(1 - \frac{\lambda_2}{\lambda_1}\right)\\
 & \qquad \qquad =\pPPq21{ {B_2}&A}{&C}{\lam_2}
- \overline{A}(\lambda_1)\overline{A}C(\lambda_1-1) A\overline{C}(\lambda_1)\overline{B_2}(\lambda_1-\lambda_2) B_2(\lambda_1)\\
 & \qquad \qquad = \pPPq21{ {B_2}&A}{&C}{\lam_2}
- B_2\overline{C}(\lambda_1) \overline{A}C(\lambda_1-1)\overline{B_2}(\lambda_1-\lambda_2).
\end{split}
\end{align*}
\bigbreak
%
%
\item When $\lambda_1 \neq 0$, we have
\begin{align*}
\begin{split}
 \FAPn{2}A{B_1 &B_2}{A}{\lambda_1, \lam_2 }
&= \sum_{y \in \FF_q} A(y) \varepsilon(1-y) \overline{B_1} (1-\lambda_1 y) \overline{B_2}(1-\lambda_2 y)\\
&= \sum_{\substack{y \in \FF_q\\ y \neq 1}} A(y) \overline{B_1} (1-\lambda_1 y) \overline{B_2} (1 - \lambda_2 y)\\
&= \sum_{y \in \FF_q} A(y) \overline{B_1} (1-\lambda_1 y) \overline{B_2} (1 - \lambda_2 y) - \overline{B_1}(1-\lambda_1) \overline{B_2}(1-\lambda_2).  \\
\end{split}
\end{align*}
Now make the substitution $\dsp y = \frac{z}{\lambda_1}$, so that $\lambda_1 y = z$.  Thus
\begin{align*}
\begin{split}
 \FAPn{2}A{B_1 &B_2}{A}{\lambda_1, \lam_2 }
&= \sum_{z \in \FF_q} A \left(\frac{z}{\lambda_1}\right) \overline{B_1}(1-z) \overline{B_2} \left( 1 - \frac{\lambda_2 z}{\lambda_1}\right) - \overline{B_1}(1-\lambda_1) \overline{B_2}(1-\lambda_2)\\
&= \overline{A}(\lambda_1)  \sum_{z \in \FF_q} A(z) \overline{B_1}(1-z) \overline{B_2} \left( 1 - \frac{\lambda_2}{\lambda_1}z \right) - \overline{B_1}(1-\lambda_1) \overline{B_2}(1-\lambda_2)\\
&=\ol A(\lam_1)\pPPq21{ {B_2}&A}{&AB_1}{\frac{\lam_2}{\lam_1}}  - \overline{B_1}(1-\lambda_1) \overline{B_2}(1-\lambda_2).
\end{split}
\end{align*}
\bigbreak
%
%
\item {
From the definition, we know
\begin{align*}
  \FAPn{2}A{B &C\ol B}{C}{\lambda_1, \lam_2 }
= \sum_{y \in \FF_q} A\(\frac y{1-y}\)\ol C\(\frac{1-\lam_2 y}{1-y}\)\ol B\(\frac{1-\lam_1 y}{1-\lam_2 y}\).
\end{align*}
When $\lam_2\neq 1$, replace $y$ by $(1-y)/(\lam_2-y)$, so $(1-\lam_2 y)/(1-y)$ becomes $y$ and then
\begin{align*}
\begin{split}
 \FAPn{2}A{B &C\ol B}{C}{\lambda_1, \lam_2 }
&= \sum_{\substack{y \in \FF_q\\ y \neq \lam_2}} A\(\frac {1-y}{\lam_2-1}\)\ol C\(y\)\ol B\(\frac{\lam_2-\lam_1+(\lam_1-1) y}{(\lam_2-1) y}\),\\
&= \ol A B(\lam_2-1)\sum_{{y \in \FF_q}} B\ol C(y) A (1- y) \overline{B}\((\lam_2-\lam_1)-(1-\lam_1)y\)\\
&\qquad \qquad \qquad \qquad -A(-1)\ol B(\lam_1)B\ol C(\lam_2).
\end{split}
\end{align*}
Therefore, if $\lam_1\neq \lam_2$, we have
\begin{multline*}
  \FAPn{2}A{B C & \ol B}{C}{\lambda_1, \lam_2 }\\
= B\ol A(\lam_2-1)\ol B(\lam_2-\lam_1)\pPPq 21{B& B\ol C}{& AB\ol C}{\frac{1-\lam_1}{\lam_2-\lam_1}}
-A(-1)B\ol C(\lam_2)\ol B(\lam_1),
\end{multline*}
and if $\lam_1= \lam_2$, the reduced formula becomes
$$
  \pPPq 21{C& A}{& C}{\lam_2}=\ol A(\lam_2-1)J(A,\ol C)-A(-1)\ol C(\lam_2).
$$
}
\bigbreak
%
%
\item  {
Under the assumptions $\lam_1\neq 0$, $1$, and $\lam_2\neq 1$, we have
\begin{align*}
\begin{split}
  \FAPn{2}\eps{B_1 & B_2}{C}{\lambda_1, \lam_2 }
&= \sum_{y \in \FF_q} \eps(y) C(1-y) \overline{B_1} (1-\lambda_1 y) \overline{B_2}(1-\lambda_2 y)\\
&= \sum_{\substack{y \in \FF_q}} C(1-y) \overline{B_1} (1-\lambda_1 y) \overline{B_2} (1 - \lambda_2 y)-1 \, (\mbox{from } y=0) \\
&= \sum_{y \in \FF_q} C(y) \overline{B_1} (1-\lambda_1(1-y)) \overline{B_2} (1 - \lambda_2 (1-y)) - 1\\
&=\ol B_1(1-\lam_1)\ol B_2(1-\lam_2)\sum_{y \in \FF_q} C(y) \overline{B_1} \(1-\frac{\lam_1}{\lam_1-1}y\) \overline{B_2} \(1-\frac{\lam_2}{\lam_2-1}y\) - 1\\
\end{split}
\end{align*}
Now after making the substitution $\dsp y = \frac{\lam_1-1}{\lambda_1}z$, 
\begin{align*}
\begin{split}
&  \FAPn{2}\eps{B_1 & B_2}{C}{\lambda_1, \lam_2 }\\
&\qquad \qquad = C \ol B_1(1-\lam_1)\ol B_2(1-\lam_2)\ol C(-\lam_1)\sum_{z \in \FF_q} C(z) \overline{B_1}(1-z) \overline{B_2} \left( 1 - \frac{\lambda_2 }{1-\lambda_1}\frac{1-\lam_1}{\lam_1}z\right) - 1\\
&\qquad \qquad =  C \ol B_1(1-\lam_1)\ol B_2(1-\lam_2)\ol C(-\lam_1)\pPPq 21{B_2& C}{& C\ol B_1} {\frac{\lam_2(1-\lam_1)}{\lam_1(1-\lam_2)}} - 1.
\end{split}
\end{align*}
\medbreak

}
\end{enumerate}
\end{proof}


\section{Cubic Transformation Formulas}\label{section:cubic}
In this section we prove our main result, which is a cubic transformation for the two-variable finite-field Appell-Lauricella functions.  
We first prove Theorem \ref{finiteF1-cubic}, our finite-field analogue of Koike and Shiga's cubic transformation, given in Section \ref{section:intro} and restated here for convenience:

\newtheorem*{finiteF1-cubic}{Theorem \ref{finiteF1-cubic}}	
\begin{finiteF1-cubic}
Let $p\equiv 1 \pmod{3}$ be prime, let $\omega$ be a primitive cubic root of unity, and let $\eta_3$ be a primitive cubic character in $\widehat{\mathbb{F}_p^\times}$.  If $\lambda, \mu \in \FF_p$ satisfy $1 + \lambda + \mu \neq 0$,  then
\begin{align*}
\FAFn{2}{\eta_3}{\eta_3 &  \eta_3}{\eps}{1-\lambda^3, 1-\mu^3} = \FAFn{2}{\eta_3}{\eta_3 &  \eta_3}{\eps}{\left(\frac{1+\omega \lambda + \omega^2 \mu}{1 + \lambda + \mu}\right)^3, \left(\frac{1+\omega^2 \lambda + \omega \mu}{1 + \lambda + \mu}\right)^3}. 
\end{align*}
\end{finiteF1-cubic}

\begin{proof} For ease of notation, set
\begin{equation}\label{eqn:zetadef}
\zeta_1 = \zeta_1(\lambda, \mu) := \frac{1+\omega\lambda + \omega^2\mu}{1+\lambda+\mu}, \quad 
\zeta_2 = \zeta_2(\lambda, \mu) :=  \frac{1+\omega^2\lambda + \omega\mu}{1+\lambda+\mu}.
\end{equation}
  Observe that by the definition of $\mathbb{F}_D^{(2)}$, the statement of Theorem \ref{finiteF1-cubic} is equivalent to the claim that 
\begin{equation}\label{cubic-equivalent}
\sum_{x\in\mathbb{F}_p} \eta_3^2\left( x^2(1-x) \left(1-(1-\lambda^3) x \right) \left(1-(1-\mu^3) x \right) \right) \\
= \sum_{x\in\mathbb{F}_p} \eta_3^2\left( x^2(1-x) (1-\zeta_1 ^3 x) (1-\zeta_2 ^3 x)  \right).
\end{equation}
Therefore, to prove Theorem \ref{finiteF1-cubic}, it suffices to prove equation \eqref{cubic-equivalent}.  To this end, 
let $\zeta_1, \zeta_2$ be as in \eqref{eqn:zetadef}, and define the polynomials 
\begin{align*}
f_{\lambda, \mu}(x) &:= x^2(1-x) (1-(1-\lambda^3) x)(1-(1-\mu^3) x),\\
g_{\lambda,\mu}(x) &:= x^2(1-x) (1-\zeta_1^3 x)(1-\zeta_2^3 x).
\end{align*}
   Let $c\in \FF_p^\times$, and define curves $C_1:y^3=cf_{\lambda, \mu}(x)$ and $C_2:y^3=cg_{\lambda, \mu}(x)$.  If we show that $C_1$ and $C_2$ have the same trace of Frobenius, then by Theorem \ref{Picard-pts}, we may conclude that
\begin{equation}\label{eqn:sametrace}
\sum_{i=1}^2 \sum_{x\in\mathbb{F}_p} \eta_3^i(cf_{\lambda,\mu}(x)) = \sum_{i=1}^2 \sum_{x\in\mathbb{F}_p} \eta_3^i(c
g_{\lambda,\mu}(x)). 
\end{equation}
Putting $c=1$ in \eqref{eqn:sametrace} and multiplying through by $\omega$ will give 

\begin{equation}\label{eq:trace1}
\omega \sum_{x\in\mathbb{F}_p} \eta_3(f_{\lambda,\mu}(x)) + \omega \sum_{x\in\mathbb{F}_p} \eta_3^2(f_{\lambda,\mu}(x)) = \omega \sum_{x\in\mathbb{F}_p} \eta_3(g_{\lambda,\mu}(x)) + \omega \sum_{x\in\mathbb{F}_p} \eta_3^2(g_{\lambda,\mu}(x)).
\end{equation}
Since $\eta_3$ is a primitive cubic character, there exists $c\in\mathbb{F}_p^\times$ such that $\eta_3(c)=\omega$.    Using this choice of $c$ in \eqref{eqn:sametrace}, we see that
\begin{equation}\label{eq:trace2}
\omega \sum_{x\in\mathbb{F}_p} \eta_3(f_{\lambda,\mu}(x)) + \omega^2 \sum_{x\in\mathbb{F}_p} \eta_3^2(f_{\lambda,\mu}(x)) = \omega \sum_{x\in\mathbb{F}_p} \eta_3(g_{\lambda,\mu}(x)) + \omega^2 \sum_{x\in\mathbb{F}_p} \eta_3^2(g_{\lambda,\mu}(x)).
\end{equation}
 
Subtracting equation \eqref{eq:trace1} from equation \eqref{eq:trace2}, and then dividing by $\omega^2 - \omega$, we have
\[
\sum_{x\in\mathbb{F}_p} \eta_3^2(f_{\lambda,\mu}(x)) = \sum_{x\in\mathbb{F}_p} \eta_3^2(g_{\lambda,\mu}(x)). 
\]
(Note that an analous argument shows that $\sum_{x\in\mathbb{F}_p} \eta_3(f_{\lambda,\mu}(x)) = \sum_{x\in\mathbb{F}_p} \eta_3(g_{\lambda,\mu}(x).$)
Therefore, proving Theorem \ref{finiteF1-cubic} has been reduced to showing that the curves $C_1$ and $C_2$ have the same trace of Frobenius.  We establish this result below. 

For $i=1, 2$, let $a_p(C_i)$ denote the trace of Frobenius on $C_i$, hence $a_p(C_i) = 1+p - \#C_i(\FF_p)$. 
It suffices to show that the curves have the same trace of Frobenius for almost all primes.  Since $C_1$ and $C_2$ have genus $3$ by Theorem \ref{thm-archinard}, we see that for each curve, the trace of Frobenius is bounded by $6\sqrt{p}$.   Thus if we show that the traces of Frobenius are congruent modulo $p$ for all $p>36$, then they are in fact equal.  When $p\equiv 2\pmod{3}$, there are no primitive cubic characters in $\widehat{\mathbb{F}_p^\times}$, and so $a_p(C_1) = 0 = a_p(C_2)$. In light of Theorem \ref{Picard-pts}, it therefore suffices to show that for $p\equiv 1 \pmod{3}$, with $1+\lambda+\mu\neq 0$, we have
\[
\sum_{x\in\mathbb{F}_p} \eta_3(f_{\lambda,\mu}(x)) \equiv \sum_{x\in\mathbb{F}_p} \eta_3(g_{\lambda,\mu}(x)) \pmod{p}. 
\]
We prove this by showing that the Hasse invariants of $C_1$ and $C_2$ agree modulo $p$, for $p \equiv 1 \pmod{3}$.  This requires the following result of Matsumoto and Ohara \cite[Thm. 1]{matsumoto-ohara}.

\begin{theorem}[Matsumoto, Ohara \cite{matsumoto-ohara}]\label{thm:MO}
Let $\omega=e^{\frac{2\pi i}{3}}$, $c\in \Z$, and $(x,y)\in \C^2$ be in an appropriately small neighborhood of $(1,1)$. Then the Lauricella function $F_{D}^{(2)}$ satisfies the following transformation formula
\begin{multline*}
\left( \frac{1+x+y}{3} \right)^c 
\Ln{2}{\frac{c}{3}}{\frac{c+1}{6}& \frac{c+1}{6}}{\frac{c+1}{2}}{1-x^3, 1-y^3}\\
=\Ln{2}{\frac{c}{3}}{\frac{c+1}{6}& \frac{c+1}{6}}{\frac{c+5}{6}}{\left(\frac{1+\omega x + \omega^2 y}{1+x+y} \right)^3, \left(\frac{1+\omega^2 x + \omega y}{1+x+y} \right)^3}.
\end{multline*}
\end{theorem}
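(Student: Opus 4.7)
My plan is to prove the Matsumoto--Ohara identity by applying the Picard--Lauricella integral representation (Theorem \ref{Appell-int}) to both sides and then constructing an explicit rational substitution in the integration variable that carries one integrand to the other. Setting $\zeta_1=(1+\omega x+\omega^2 y)/(1+x+y)$ and $\zeta_2=(1+\omega^2 x+\omega y)/(1+x+y)$, I would first rewrite the two $F_D^{(2)}$-values as
\begin{align*}
\mathrm{LHS}&=\Bigl(\tfrac{1+x+y}{3}\Bigr)^{\!c}\,\frac{\Gamma(\tfrac{c+1}{2})}{\Gamma(\tfrac{c}{3})\Gamma(\tfrac{c+3}{6})}\int_0^1 t^{c/3-1}(1-t)^{(c-3)/6}\bigl[(1-(1-x^3)t)(1-(1-y^3)t)\bigr]^{-(c+1)/6}\,dt,\\
\mathrm{RHS}&=\frac{\Gamma(\tfrac{c+5}{6})}{\Gamma(\tfrac{c}{3})\Gamma(\tfrac{5-c}{6})}\int_0^1 s^{c/3-1}(1-s)^{-(c+1)/6}\bigl[(1-\zeta_1^3 s)(1-\zeta_2^3 s)\bigr]^{-(c+1)/6}\,ds,
\end{align*}
so the claim is reduced to an equality between two one-dimensional Euler integrals.

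The key algebraic input is the classical cubic factorization
$$A^3+B^3+C^3-3ABC=(A+B+C)(A+\omega B+\omega^2 C)(A+\omega^2 B+\omega C),$$
specialized at $(A,B,C)=(1,x,y)$ and at related triples $(1,\omega^j x,\omega^k y)$. These give the central relations $1+\zeta_1+\zeta_2=3/(1+x+y)$ and $\zeta_1\zeta_2(1+x+y)^3=1+x^3+y^3-3xy$, from which one computes the cubic polynomial $(1-s)(1-\zeta_1^3 s)(1-\zeta_2^3 s)$ explicitly as a polynomial in $s$ whose coefficients are $\langle\omega\rangle$-invariant functions of $x,y$. I would then look for a substitution $s=\Phi(t;x,y)$ (of low degree in $t$, with coefficients rational in $x,y$) pinned down by the endpoint conditions $t=0\Leftrightarrow s=0$ and $t=1\Leftrightarrow s=1$, and by the requirement that $\Phi$ carry the three factors $(1-s),(1-\zeta_1^3 s),(1-\zeta_2^3 s)$ into a common scalar multiple of $(1-t),(1-(1-x^3)t),(1-(1-y^3)t)$. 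The Jacobian $ds/dt$ should then exactly account for the $\bigl((1+x+y)/3\bigr)^c$ prefactor as well as the discrepancy between the two Gamma-prefactors via a Beta-integral identity.

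The main obstacle is guessing the explicit substitution $\Phi$; the fact that all three $s$-factors in the RHS share the same exponent $-(c+1)/6$ strongly suggests a degree-three substitution compatible with the cyclic $\langle\omega\rangle$-symmetry of the parametrization $(1,x,y)\mapsto(1,\zeta_1,\zeta_2)$, and the natural Ansatz is a rational map whose branch divisor matches the five singularities $\{0,1,(1-x^3)^{-1},(1-y^3)^{-1},\infty\}$ of the LHS integrand. If a direct search proves intractable, a backup strategy is to proceed differentially: both sides are holomorphic near $(x,y)=(1,1)$ (where $\zeta_1=\zeta_2=0$ and both sides evaluate to $1$), so one can verify that the LHS, transplanted through the cubic change of variables and divided by the prefactor $\bigl((1+x+y)/3\bigr)^c$, satisfies the holonomic Appell--Lauricella PDE system governing the RHS, and then invoke uniqueness of the local holomorphic solution matching the prescribed value at $(1,1)$ to conclude equality.
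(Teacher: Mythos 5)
First, orientation: the paper does not prove Theorem \ref{thm:MO} at all --- it is quoted, with attribution, from Matsumoto--Ohara \cite{matsumoto-ohara} and used as a black box in the proof of Theorem \ref{finiteF1-cubic} --- so your attempt must be judged against what a genuine proof requires, not against an internal argument of the paper. Your primary route (a rational substitution $s=\Phi(t)$ carrying one Euler integrand to the other) provably cannot work, which is more than a missing detail. Since $\Phi$ would be pinned down by data not involving $c$ (endpoints, factor matching), the pullback identity would have to hold identically in $c$. Compare local exponents: the left integrand $t^{c/3-1}(1-t)^{(c-3)/6}\bigl[(1-(1-x^3)t)(1-(1-y^3)t)\bigr]^{-(c+1)/6}$ has exponents $c/3-1,\ (c-3)/6,\ -(c+1)/6,\ -(c+1)/6,\ -(c+1)/6$ at $t=0,1,(1-x^3)^{-1},(1-y^3)^{-1},\infty$, while the right integrand $s^{c/3-1}\bigl[(1-s)(1-\zeta_1^3s)(1-\zeta_2^3s)\bigr]^{-(c+1)/6}$ has exponent $c/3-1$ at $s=0$, exponent $-(c+1)/6$ at $s=1,\zeta_1^{-3},\zeta_2^{-3}$, and exponent $(c-3)/6$ at $s=\infty$. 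A point lying with ramification index $e$ over a singular point of exponent $\beta$ acquires exponent $e\beta+(e-1)$ under pullback; requiring this to match the left-hand exponents identically in $c$ (compare coefficients of $c$) forces $e=1$ at all five left-hand points and forces $\Phi(1)=\infty$, contradicting your endpoint condition $\Phi(1)=1$. Equivalently, your factor matching sends $(1-s)$ to a multiple of $(1-t)$ and thus produces exponent $-(c+1)/6$ at $t=1$ where $(c-3)/6$ is needed, a discrepancy of $(c-1)/3$ that neither the Jacobian (integer exponents) nor $\Phi^{c/3-1}$ (regular and nonzero at $t=1$) can absorb. No choice of degree rescues this: any preimage of a right-hand singular point that is not a left-hand singular point would need $e\beta+(e-1)=0$ identically in $c$, which is impossible, so all $5\deg\Phi$ preimages (with multiplicity) must lie among the five unramified left-hand points, forcing $\deg\Phi=1$; and the M\"obius case fails because it would force $\{1,x^{-3},y^{-3}\}$ and $\{1,\zeta_1^{-3},\zeta_2^{-3}\}$ to agree up to a common scalar, false for generic $(x,y)$. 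This reflects the true geometry: the cubic transformation lives on a correspondence between the associated cyclic covers (Picard-type curves), not on a self-map of the $t$-line. A further problem in the same part: your Euler representation of the right side requires $\Re((5-c)/6)>0$, i.e.\ $0<\Re(c)<5$, which excludes precisely the values $c=1-p$ for which the paper invokes the theorem, so any integral-representation argument needs an additional analytic-continuation-in-$c$ step that you never mention.

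Your fallback route --- transport the left side through $(x,y)\mapsto(u_1,u_2)=(\zeta_1^3,\zeta_2^3)$, verify that it satisfies the rank-three Lauricella system annihilating the right side, and invoke uniqueness of the local holomorphic solution with value $1$ at the origin --- is the viable strategy, and it is in substance how transformation formulas of this kind (including the cited one) are standardly established. But in your proposal this verification is a single sentence, and it is the entire mathematical content of the theorem; nothing is computed. Even as an outline it has two unaddressed gaps: (i) the map $(x,y)\mapsto(\zeta_1^3,\zeta_2^3)$ is a nine-to-one cover branched precisely at $(1,1)$ (both $\zeta_i$ vanish there), so the transported function is a priori multivalued and its single-valuedness must be proved, not assumed; (ii) uniqueness of the holomorphic solution can fail at integer parameters: the companion local solutions behave like $u_i^{1-\gamma}$ with $\gamma=(c+5)/6$, which pull back to $\zeta_i^{(1-c)/2}$, and these are also holomorphic at $(1,1)$ whenever $c$ is odd with $c\le 1$. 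So one should prove the identity for generic complex $c$ and recover all integers $c$ by analytic continuation. As it stands, then, your proposal consists of a first half that demonstrably cannot succeed and a second half that names the right method but carries none of it out.
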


The Hasse invariant $H(C)$ of a curve $C: y^3=x^2(1-x)(1-sx)(1-tx)$ in parameters $s,t$ is the $(p-1)^{st}$ coefficient of the polynomial $(x^2(1-x)(1-sx)(1-tx))^\frac{p-1}{3}$.  Let $m=\frac{p-1}{3}\in\N$.  By the binomial theorem, we have that 
\[
(x^2(1-x)(1-sx)(1-tx))^m = \sum_{i,j,k=0}^m\binom{m}{i}\binom{m}{j}\binom{m}{k}(-1)^{i+j+k}s^jt^kx^{2m+(i+j+k)}.
\]
Thus the $(p-1)^{st}$ coefficient will be obtained when $j+k=m-i$.  That is, 
\begin{align*}
H(C) = (-1)^m \sum_{j=0}^m\sum_{k=0}^{m-j}\binom{m}{j+k}\binom{m}{j}\binom{m}{k}s^jt^k = (-1)^\frac{p-1}{3}
 \Ln{2}{\frac{1-p}{3}}{\frac{1-p}{3}& \frac{1-p}{3}}{1}{s, t}_{\frac{p-1}{3}}, 
\end{align*}

where the subscript $\frac{p-1}{3}$ denotes the truncation of the series at the degree $\frac{p-1}{3}$ term.

This implies that 
\begin{align*}
H(C) \equiv (-1)^\frac{p-1}{3}
 \Ln{2}{\frac{1}{3}}{\frac{1}{3}& \frac{1}{3}}{1}{s, t}_{\frac{p-1}{3}} \pmod{p}.
 \end{align*}
Thus to complete the proof, it remains to show that the $F_{D}^{(2)}$ functions corresponding to $C_1$ and $C_2$ agree modulo $p$, which we handle in the following lemma.
\end{proof}

\begin{lemma}
Suppose $\lambda, \mu \in \mathbb{F}_p$ such that $1+\lambda+\mu \neq 0$.  Then,
\begin{multline*}
 \Ln{2}{\frac{1}{3}}{\frac{1}{3} &\frac{1}{3}}{1}{1-\lambda^3, 1-\mu^3}_{\frac{p-1}{3}}\\
\equiv 
 \Ln{2}{\frac{1}{3}}{\frac{1}{3} &\frac{1}{3}}{1}{\left(\frac{1+\omega \lambda + \omega^2 \mu}{1+\lambda+\mu} \right)^3, \left(\frac{1+\omega^2 \lambda + \omega \mu}{1 + \lambda + \mu}\right)^3}_{\frac{p-1}{3}}
\pmod p.
\end{multline*}
\end{lemma}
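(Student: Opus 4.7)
My plan is to apply Matsumoto--Ohara (Theorem~\ref{thm:MO}) with the specific choice $c = 1-p$. Since $p \equiv 1 \pmod{3}$, we have $c/3 = -(p-1)/3 = -m$, a non-positive integer. With this upper parameter, the $F_D^{(2)}$ series on both sides of Matsumoto--Ohara terminate at hypergeometric degree $m$ (as $(-m)_k = 0$ for $k > m$), so both sides become polynomials of degree $\leq m$ in the hypergeometric variables---matching exactly the truncations appearing in the lemma.

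Next, each Pochhammer parameter reduces modulo $p$ to the corresponding parameter for $c = 1$; explicitly, using $-m \equiv 1/3 \pmod{p}$ one verifies
\[
-m + \ell \equiv \tfrac{1}{3} + \ell, \quad \tfrac{2-p}{6} + \ell \equiv \tfrac{1}{3} + \ell, \quad \tfrac{2-p}{2} + \ell \equiv 1 + \ell, \quad \tfrac{6-p}{6} + \ell \equiv 1 + \ell \pmod{p}
\]
for all $\ell \geq 0$. A direct check shows each Pochhammer appearing in the $c = 1-p$ polynomials is $p$-integral (with no factor of $p$ in either numerator or denominator, for indices $\leq m$), so the polynomial coefficients on both sides of Matsumoto--Ohara reduce modulo $p$ to the coefficients of $F_D^{(2)}\!\left[\tfrac{1}{3}; \tfrac{1}{3}, \tfrac{1}{3}; 1\right]$ truncated at degree $m$. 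Meanwhile, the multiplicative prefactor $\left(\tfrac{1+\lambda+\mu}{3}\right)^{1-p} = \left(\tfrac{1+\lambda+\mu}{3}\right)^{-(p-1)}$ is, by Fermat's Little Theorem, congruent to $1$ modulo $p$ for every $(\lambda, \mu) \in \mathbb{F}_p^2$ with $1+\lambda+\mu \neq 0$. Combining these observations, reduction modulo $p$ of the $c = 1-p$ Matsumoto--Ohara identity on this locus yields exactly the asserted congruence.

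The main obstacle is justifying that Theorem~\ref{thm:MO}, stated for convergent expansions in a neighborhood of $(\lambda, \mu) = (1, 1)$ over $\mathbb{C}$, applies with $c = 1-p$ to yield an algebraic identity that can be reduced modulo $p$. For this negative integer value of $c$, however, both $F_D^{(2)}$ series are polynomials in the hypergeometric variables, so after clearing the $\bigl((1+\lambda+\mu)/3\bigr)^{-(p-1)}$ factor the identity becomes a polynomial identity in $\mathbb{Q}[\lambda, \mu]$; this polynomial identity reduces modulo $p$, and its evaluation at any $(\lambda,\mu) \in \mathbb{F}_p^2$ with $1+\lambda+\mu \neq 0$ gives the lemma.
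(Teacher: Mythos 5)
Your proposal is correct and follows essentially the same route as the paper: specialize Matsumoto--Ohara at $c=1-p$, note that $\frac{1-p}{3}$ being a negative integer forces both series to terminate at degree $\frac{p-1}{3}$, kill the prefactor $\left(\frac{1+\lambda+\mu}{3}\right)^{1-p}$ by Fermat's Little Theorem, and reduce the parameters modulo $p$ to $\frac13;\frac13,\frac13;1$. Your added care about $p$-integrality of the Pochhammer coefficients and about promoting the analytic identity near $(1,1)$ to a polynomial identity over $\mathbb{Q}(\omega)$ before reducing modulo a prime above $p$ fills in details the paper leaves implicit, but it is the same argument.
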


\begin{proof}

Given a prime $p\equiv 1 \pmod 3$, setting $c=1-p$ in Theorem \ref{thm:MO} gives 
\begin{multline*}
\left( \frac{1+\lam+\mu}{3} \right)^{1-p} 
\Ln{2}{\frac{1-p}{3}}{\frac{1}{3}-\frac{p}{6} &\frac{1}{3}-\frac{p}{6}}{1-\frac{p}{2}}{1-\lambda^3, 1-\mu^3}\\
=
\Ln{2}{\frac{1-p}{3}}{\frac{1}{3}-\frac{p}{6} &\frac{1}{3}-\frac{p}{6}}{1-\frac{p}{6}}{\left(\frac{1+\omega \lambda + \omega^2 \mu}{1+\lambda+\mu} \right)^3, \left(\frac{1+\omega^2 \lambda + \omega \mu}{1 + \lambda + \mu}\right)^3}
\end{multline*}
Observe that since $\frac{1-p}{3}$ is a negative integer, the hypergeometric function on each side of the above equality truncates naturally at the power $\frac{p-1}{3}$.  Also, since $1 + \lambda + \mu \neq 0$, then by Fermat's Little Theorem, we have $\left( \frac{1+\lam+\mu}{3} \right)^{1-p} \equiv 1 \pmod{p}$.  The left-hand side is therefore congruent modulo $p$ to 
$F^{(n)}_D\left[\begin{matrix}\frac{1}{3};& \frac{1}{3} & \frac{1}{3}  \smallskip \\  {} & 1 \end{matrix} \; ; \; 1-\lam^3, 1-\mu^3 \right]_{\frac{p-1}{3}}$ .
Similarly the right-hand side is congruent to 
$F^{(n)}_D\left[\begin{matrix}\frac{1}{3};& \frac{1}{3} & \frac{1}{3}  \smallskip \\  {} & 1 \end{matrix} \; ; \; 
\left(\frac{1+\omega \lam + \omega^2 \mu}{1+\lam+\mu} \right)^3, \left(\frac{1+\omega^2 \lam + \omega \mu}{1+\lam+\mu} \right)^3 \right]_{\frac{p-1}{3}}$ .
Combining the congruences gives the desired result.
\end{proof}

We next give a proof of  Corollary \ref{finite2F1-cubic}, the finite-field analogue of Borwein and Borwein's cubic arithmetic-geometric mean, which follows from the theorem above.  We restate the corollary here for convenience.

\newtheorem*{finite2F1-cubic}{Corollary \ref{finite2F1-cubic}}	
\begin{finite2F1-cubic}
For $p\equiv 1 \pmod{3}$ prime, and $\omega$ as above, if $\lam \in \FF_p$ satisfies $1 + 2\lam \neq 0$,  then
\begin{align*}
\pFFq{2}{1}{\eta_3 & \eta_3^2}{&\eps}{1-\lam^3} 
=
\pFFq{2}{1}{\eta_3 & \eta_3^2}{&\eps}{\left(\frac{1- \lam}{1 + 2\lam}\right)^3}.
\end{align*}
\end{finite2F1-cubic}

\begin{proof}
Let $\lambda \in \FF_p$ with $1 + 2\lam \neq 0$, and put $\zeta := \frac{1+\omega \lambda + \omega^2 \lam}{1 + 2 \lambda} = \frac{1-\lam}{1+2\lam}$.  The corollary holds trivially when $\lam = 0$ or $1$, so suppose that $\lam \neq 0, 1$, and consequently $\zeta \neq 0$.  Applying Proposition \ref{FF-schlosser33a} to the left-hand of Theorem \ref{finiteF1-cubic} gives
\begin{align*}
\begin{split}
\FAFn{2}{\eta_3}{\eta_3&  \eta_3}{\eps}{1-\lam^3, 1-\lam^3 } &= \eps(1-(1-\lambda^3))  \, \pFFq21{\eta_3^2 &  \eta_3}{&\eps}{1-\lam^3}
+\delta(1-(1-\lambda^3)) \frac{J(\eta_3, \eps)}{J(\eta_3,\eta_3^2)} \\
&= \eps(\lambda^3)  \, \pFFq21{\eta_3^2 &  \eta_3}{&\eps}{1-\lam^3}
+\delta(\lambda^3) \frac{J(\eta_3, \eps)}{J(\eta_3,\eta_3^2)} \\
& = \pFFq21{\eta_3^2 &  \eta_3}{&\eps}{1-\lam^3}.\\
\end{split}
\end{align*}
Similarly, since $\zeta \neq 0$, applying the proposition to the right-hand side of Theorem \ref{finiteF1-cubic} gives
\begin{align*}
\begin{split}
\FAFn{2}{\eta_3}{\eta_3 &  \eta_3}{\eps}{\zeta^3, \zeta^3}
= \eps(1-\zeta^3)  \, \pFFq21{\eta_3^2 &  \eta_3}{&\eps}{\zeta^3}
+\delta(1-\zeta^3) \frac{J(\eta_3, \eps)}{J(\eta_3,\eta_3^2)} = \pFFq21{\eta_3^2 &  \eta_3}{&\eps}{\zeta^3}.
\end{split}
\end{align*}
The result then follows from the symmetry of the $\FFtwo$-function in the characters $A$ and $B$.
\end{proof}


\section{Acknowledgements}\label{section:thanks}
We thank Ling Long for providing the inspiration for this work.  We also thank the organizers of the ICERM special semester program on Computational Aspects of the Langlands Program, and the organizers of the MATRIX workshop on Hypergeometric Motives and Calabi-Yau Differential Equations, as these programs fostered this collaboration.  
\bigbreak




\end{document}